\newtheorem{theorem}{Theorem}
\newtheorem{lemma}{Lemma}
\newtheorem{remark}{Remark}
\newtheorem{proposition}{Proposition}
\theoremstyle{definition}
\newtheorem{definition}{Definition}
\renewcommand*\backref[1]{}
\renewcommand*\backrefalt[4]{ \ifcase #1 \or (cited on page #2) \else (cited on pages #2) \fi}
\newcommand{\be}{\begin{equation}}
\newcommand{\ee}{\end{equation}}
\newcommand{\bea}{\begin{eqnarray}}
\newcommand{\eea}{\end{eqnarray}}
\newcommand{\eps}{\varepsilon}
\newcommand{\vs}{\vspace{0.5cm}}
\newcommand{\vsv}{\vspace{0.12cm}}
\def\XXint#1#2#3{{\setbox0=\hbox{$#1{#2#3}{\int}$ }
\vcenter{\hbox{$#2#3$ }}\kern-.6\wd0}}
\begin{document}

\title[Bismut K\"ahler-like manifolds]{Bismut K\"ahler-like manifolds of dimension $4$ and $5$}
	
\author{Quanting Zhao}
\address{Quanting Zhao. School of Mathematics and Statistics, and Hubei Key Laboratory of Mathematical Sciences, Central China Normal University, P.O. Box 71010, Wuhan 430079, P. R. China.} \email{zhaoquanting@126.com;zhaoquanting@mail.ccnu.edu.cn}
\thanks{Zhao is partially supported by National Natural Science Foundations of China with the grant No.11801205 and 12171180.
Zheng is partially supported by National Natural Science Foundations of China
with the grant numbers 12141101 and 12071050, Chongqing grant cstc2021ycjh-bgzxm0139, and is supported by the 111 Project D21024.}

\author{Fangyang Zheng}
\address{Fangyang Zheng. School of Mathematical Sciences, Chongqing Normal University, Chongqing 401331, China}
\email{20190045@cqnu.edu.cn} \thanks{}

\subjclass[2010]{53C55 (primary), 53C05 (secondary)}
\keywords{Bismut K\"ahler-like manifolds; Bismut connection; Chern connection; Levi-Civita connection; pluriclosed manifolds.}

\begin{abstract}
This paper is a sequel to our studies \cite{ZZ} and \cite{YZZ} on Bismut K\"ahler-like manifolds, or {\em BKL} manifolds for short. We will study the structural theorems for {\em BKL} manifolds, prove a conjecture raised in \cite{YZZ} which states that any {\em BKL} manifold that is Bismut Ricci flat must be Bismut flat, and give complete classifications of {\em BKL} manifolds in dimension $4$ and $5$.
\end{abstract}

\maketitle

\tableofcontents

\markleft{Quanting Zhao and Fangyang Zheng}
\markright{Bismut K\"ahler-like manifolds}

\section{Introduction and statement of results}

Recall \cite{Bismut} that the {\em Bismut connection} $\nabla^b$ of a Hermitian manifold $(M^n,g)$ is the unique Hermitian connection with totally skew-symmetric torsion tensor. It is also called {\em Strominger connection} in some literature due to the seminal work \cite{Strominger} where it was called the H-connection. In physics literature it is often called the {\em KT} (K\"ahler with torsion) connection or {\em characteristic} connection. Since the need of non-K\"ahler Calabi-Yau spaces in string theory (\cite{GatesHR}, \cite{Strominger}), this connection has been receiving more and more attention from geometers and mathematical physicists alike. We refer the readers to  \cite{AI}, \cite{AV}, \cite{EFV}, \cite{FG},\cite{FY}, \cite{FGV}, \cite{FinoTomassini}, \cite{FV}, \cite{FV1}, \cite{Fu}, \cite{Fu-Li-Yau}, \cite{GGP}  \cite{IvanovP}, \cite{Liu-Yang},   \cite{Popovici}, \cite{Sal}, \cite{S18}, \cite{STW}, \cite{Tosatti}, \cite{Tseng-Yau}, \cite{Ugarte}, \cite{ZZ3}, \cite{ZhouZ} and the references therein for more discussions on Bismut-Strominger connection, pluriclosed metric, and related topics.

Yang and Zheng \cite{YZ} introduced the notion of K\"ahler-likeness for Levi-Civita and Chern connections, following the pioneer work of Gray \cite{Gray} and others. Angella, Otal, Ugarte, and Villacampa \cite{AOUV} generalized it to all metric connections on Hermitian manifolds (see also \cite{FT}, \cite{FTV}). A metric connection $D$ on $(M^n,g)$ is said to be {\em K\"ahler-like,} if its curvature tensor $R^D$ satisfies
\begin{eqnarray*}
&& R^D(x,y,z,w) + R^D(y,z,x,w) + R^D(z,x,y,w) = 0, \\
&& R^D(x,y,Jz,Jw)= R^D(x,y,z,w) = R^D(Jx,Jy,z,w),
\end{eqnarray*}
for any tangent vectors $x$, $y$, $z$, $w$ in $M^n$, where $J$ is the almost complex structure of $M$. The first equation is  called the {\em Bianchi identity}, and the second line is called the {\em type condition}. Note that the first equality in the type condition is always satisfied when $DJ=0$.

If we extend the metric $g=\langle , \rangle$ and the curvature $R^D$ linearly over ${\mathbb C}$, and use the decomposition $TM\otimes {\mathbb C} = T^{1,0}M \oplus T^{0,1}M$, where $T^{1,0}M$ consists of all vector fields of the form $X= x-\sqrt{-1}Jx$, where $x$ is real, then the above definition, as in \cite[Remark 6]{AOUV}, is equivalent to
\begin{eqnarray*}
&& R^D(X,\overline{Y}, Z, \overline{W}) = R^D(Z,\overline{Y}, X, \overline{W}), \\
&& R^D(X, Y, \ast , \ast )= R^D(\ast , \ast , Z, W) = 0,
\end{eqnarray*}
for any type $(1,0)$ complex tangent vectors $X$, $Y$, $Z$, and $W$.

For each of the three canonical connections on Hermitian manifolds, namely, Levi-Civita (or Riemannian), Chern, and Bismut connection, there are compact K\"ahler-like manifolds that are not K\"ahler. For instance, any compact Chern flat manifold \cite{Boothby} is certainly Chern K\"ahler-like, and there are non-K\"ahler such examples in dimension $n\geq 3$. However, compact Chern K\"ahler-like manifolds are known to be Chern flat within some special classes (see for example \cite{GZ}, \cite{ZZ1}, \cite{Zheng1}), and to our best knowledge, there is no known example yet of a compact non-K\"ahler manifold which is Chern K\"ahler-like but not Chern flat, despite the general belief that such manifolds should exist.

For Riemannian K\"ahler-like ones, any Riemannian flat Hermitian manifold will be such examples, and in complex dimension $n\geq 3$, there are plenty of such metrics that are non-K\"ahler, yet a full classification is still missing when $n\geq 4$, although the three dimensional case has been solved by \cite{KYZ}. When $n\geq 3$, there are also examples of compact non-K\"ahler manifolds which are Riemannian K\"ahler-like but not Riemannian flat.

For $t$-Gauduchon connections \cite{Gauduchon1}, which is the line of connections joining Chern and Bismut, it was proved in the recent work of Lafuente and Stanfield \cite{LS} that other than Chern or Bismut, a Gauduchon connection cannot be K\"ahler-like unless the metric is already K\"ahler. Earlier partial results were obtained in \cite{Fu-Zhou}, \cite{VYZ}, \cite{YZ1}, \cite{ZZ2}.

The richest class of non-K\"ahler, K\"ahler-like examples occur for Bismut connection. We will call a Hermitian manifold $(M^n,g)$ whose Bismut connection is K\"ahler-like a {\em Bismut K\"ahler-like} manifold, or a {\em BKL} manifold in short. (Note that it was also called Strominger K\"ahler-like manifolds in some literature). Compact, non-K\"ahler {\em BKL} manifolds form a rather interesting and restrictive class of Hermitian manifolds. The special case when the Bismut connection is flat (or {\em Bismut flat} in short) was classified in \cite{WYZ}. They are exactly the compact quotients of Samelson spaces \cite{Samelson}, namely, simply-connected Lie groups with bi-invariant metrics and compatible left invariant complex structures. In \cite{AOUV}, the authors classified all the {\em BKL} manifolds amongst all complex nilmanifolds and Calabi-Yau type solvmanifolds of dimension $n=3$.  We showed in \cite{ZZ} that all {\em BKL} manifolds are pluriclosed, confirming a conjecture raised in \cite{AOUV}. What we proved in \cite{ZZ} is actually slightly stronger, namely, given any Hermitian manifold, the {\em BKL} condition is equivalent to the pluriclosedness of the metric plus the torsion parallelness with respect to Bismut connection.

When $n=2$, the {\em BKL} condition is equivalent to the condition {\em Vaisman}, which means a Hermitian manifold that is locally conformal K\"ahler and its Lee form is parallel under Levi-Civita connection. Compact Vaisman surfaces were classified by Belgun in the beautiful work \cite{Belgun}.

In our recent work \cite{YZZ}, we were able to classify complete non-K\"ahler {\em BKL} threefolds. They are either the product of a K\"ahler curve and a {\em BKL} surface, or the standard Hermitian structure on the product of two Sasakian $3$-manifolds. It turns out that the torsion of {\em BKL} manifolds in dimension $2$ or $3$ are degenerate, namely, $T^{\ast}_{ik}=0$ for any $i,k<n$, where $e_n$ lines up with the direction of the Gauduchon $1$-form $\eta$ (\cite{Gauduchon}). Moreover, we proved in \cite{YZZ} that if a complete, non-K\"ahler {\em BKL} manifold $(M^n,g)$ has degenerate torsion, then its universal cover is the product of a {\em BKL} surface or threefold with a K\"ahler manifold.

For {\em BKL} manifolds with $n\geq 4$, its torsion tensor is no longer always degenerate, unlike in dimensions $2$ and $3$. However, this type of manifolds still seem to be quite restrictive, as illustrated by the numerous properties they must satisfy as proved in \cite{ZZ} and \cite{YZZ}. The goal of this paper is to give a more detailed analysis on {\em BKL} manifolds in general dimensions. We obtain some refined structural results for such manifolds, which in particular enable us to give a complete classification in dimension $4$ and $5$.

To state our main results, first let us recall the tensors $A$, $B$ and $\phi$ on a Hermitian manifold $(M^n,g)$, where more details will be given in \S \ref{tor}. Under any local unitary frame $e$ with dual coframe $\varphi$, they are defined respectively by
$$ A_{i\overline{j}} = \sum_{q,k} T^q_{ik} \overline{ T^q_{jk}}, \ \ \ \ B_{i\overline{j}} = \sum_{q,k} T^j_{qk} \overline{T^i_{qk}},  \ \ \ \ \phi_i^j = \sum_q T^j_{iq} \overline{\eta}_q, $$
where $T_{qk}^i$ are the components of Chern torsion $T^c$ and $\eta_q$ are the ones of the Gauduchon's torsion $1$-form $\eta = \sum_q \eta_q \varphi^q = \sum_{i,q}T^i_{iq}\varphi^q$. The $(1,0)$-type vector field $X_{\eta}$ associated to $\eta$ is defined by the equality $g(Y,\overline{X_{\eta}})=\eta(Y)$ for any $Y$. The tensors $A$ and $B$ are positive semi-definite, and $A$, $B$, $\phi$ are all parallel with respect to the Bismut connection $\nabla^b$ on {\em BKL} manifolds since $\nabla^bT=0$ holds. The rank of $B$, namely the number of positive eigenvalues of $B$, will be called the {\em $B$-rank} of $g$, denoted by $r_{\!{\tiny B}}$, introduced in Definition \ref{cmpt}. It satisfies $0\leq r_{\!{\tiny B}} \leq n-1$, and $r_{\!{\tiny B}}=0$ holds if and only if $g$ is K\"ahler. We have the following

\begin{theorem} \label{thm1}
Let $(M^n,g)$ be a BKL manifold with $n\geq 2$. The condition $A>0$ implies its $B$-rank $r_{\!{\tiny B}}\geq \frac{n}{2}$. If $g$ is complete, then $A\not>0$ if and only if $M^n$ admits at least one K\"ahler de Rham factor. It also holds that $\mbox{ker}A \subseteq \mbox{ker}B \cap X_{\eta}^{\bot}$ and $\mbox{ker}\,\phi =\mbox{ker}B$.
\end{theorem}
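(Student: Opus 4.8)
The plan is to treat all four assertions as consequences of the algebraic identities satisfied by the Chern torsion $T^c$ on a BKL manifold. Since $\nabla^b T^c=0$, the tensors $A$, $B$, $\phi$ are parallel, so their kernels and ranks are constant and define $\nabla^b$-parallel distributions; the kernel statements (the inclusion $\ker A\subseteq\ker B\cap X_\eta^\perp$, the equality $\ker\phi=\ker B$) and the rank bound are pointwise and may be checked at a single point in a unitary frame, using the torsion identities recorded in Section \ref{tor} (and in \cite{ZZ}, \cite{YZZ}). I would first record the linear-algebra dictionary: writing $Q_A(v)=\sum_{q,k}|\sum_i T^q_{ik}v_i|^2$ and $Q_B(v)=\sum_{q,k}|\sum_j T^j_{qk}\overline{v_j}|^2$ shows that $v\in\ker A$ iff $v$ contracted into a lower slot annihilates $T^c$, while $v\in\ker B$ iff $v\perp\mathrm{Im}\,T^c$; moreover $\phi(v)=T^c(v,X_\eta)$, so $\ker\phi=\{v:T^c(v,X_\eta)=0\}$ and $\ker B=(\mathrm{Im}\,T^c)^\perp$.

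With this dictionary, the inclusion $\ker A\subseteq X_\eta^\perp$ is immediate and uses only the skew-symmetry of $T^c$ in its lower indices together with $\eta_q=\sum_p T^p_{pq}$: if $v\in\ker A$ then $\eta(v)=\sum_{i,p}v_i T^p_{pi}=0$. The two remaining kernel facts, $\ker A\subseteq\ker B$ and $\ker\phi=\ker B$, are where the Bismut K\"ahler-like hypothesis genuinely enters: they cannot follow from skew-symmetry alone, since they relate contractions in the lower slots (respectively with $X_\eta$) to the \emph{upper} slot of $T^c$. I would derive them from the parallel-torsion identities, commuting Bismut-covariant derivatives against $\nabla^b T^c=0$ and feeding in the type condition and the symmetry $R^b(X,\overline Y,Z,\overline W)=R^b(Z,\overline Y,X,\overline W)$; this produces the ``hidden symmetry'' of the parallel Chern torsion that interchanges the role of the upper index with the lower ones and yields both inclusions. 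The payoff is that $\ker\phi=\ker B=(\mathrm{Im}\,T^c)^\perp$ forces $\phi$ to be supported on $W:=\mathrm{Im}\,T^c$ with $\phi|_W$ invertible, a fact I will use crucially for the rank bound.

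For the rank estimate I would reduce, via $\ker\phi=\ker B$, the statement $r_{\!{\tiny B}}\geq\frac n2$ to $\mathrm{rank}\,\phi\geq\frac n2$, i.e. $\dim(\mathrm{Im}\,T^c)\geq\frac n2$. The hypothesis $A>0$ says exactly that $v\mapsto T^c(v,\cdot)$ is injective, so $T^c\colon V\times V\to W$ is a nondegenerate skew pairing into $W=\mathrm{Im}\,T^c$. I expect this step to be the main obstacle: nondegeneracy alone is far too weak (a symplectic form valued in a single line is nondegenerate yet has one-dimensional image, so the bound is false for general skew tensors), and one must use the full strength of the BKL identities — concretely, I expect the parallel structure to furnish an injection $W^\perp\hookrightarrow W$ when $A>0$ — to force $\dim W^\perp\leq\dim W$ and hence $n\leq 2\,r_{\!{\tiny B}}$. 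It is reassuring that $\ker\phi=\ker B$ already excludes the generic symplectic-valued-in-a-line example, which indicates that the kernel identities of the previous paragraph are precisely what rescue the estimate; the Hopf-type surfaces, where $A>0$ and $r_{\!{\tiny B}}=1=\frac n2$, show the bound is sharp and should serve as the model for the extremal case.

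Finally, for the completeness statement I would pass to the universal cover and invoke the de Rham decomposition. The point of proving the sharper inclusion $\ker A\subseteq\ker B\cap X_\eta^\perp$ is that a vector $v\in\ker A$ then annihilates $T^c$ in every slot (lower by $\ker A$, upper by $v\perp\mathrm{Im}\,T^c$, and the trace by $\eta(v)=0$), so the skew torsion $3$-form vanishes whenever one argument lies in $\ker A$; consequently $\nabla^b$ and the Levi-Civita connection agree along $\ker A$. Since $A$ is $\nabla^b$-parallel and the two connections agree there, $\ker A$ is a Levi-Civita-parallel, $J$-invariant distribution along which the metric is K\"ahler, which by de Rham splits off a K\"ahler factor of the universal cover; thus $A\not>0$ yields a K\"ahler de Rham factor. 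Conversely, a K\"ahler de Rham factor has vanishing torsion in its directions, so its tangent vectors lie in $\ker A$ and $A\not>0$. Completeness is used only to guarantee the global de Rham splitting.
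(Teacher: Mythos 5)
Your outline has the right skeleton --- the linear-algebra dictionary for $\ker A$, $\ker B$ and $\phi$, the reduction of the completeness statement to showing that $\ker A$ is Levi-Civita parallel, and the correct diagnosis that the rank bound must come from an injection $\ker B=(\mathrm{Im}\,T^c)^{\bot}\hookrightarrow \mathrm{Im}\,T^c$ --- but the two places where the theorem actually lives are left as expectations rather than proofs. First, the kernel identities. The inclusion $\ker A\subseteq\ker B$ does follow in one line from the identities already recorded in \S\ref{tor}: substituting $T^{\ast}_{i\ast}=0$ into (\ref{eq:Psum=0}) leaves $-\sum_q|T^i_{kq}|^2=0$, hence $T^i_{\ast\ast}=0$; you could have written this instead of appealing to an unspecified ``hidden symmetry'' obtained by commuting covariant derivatives. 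But $\ker\phi=\ker B$ is not a formal consequence of any symmetry interchanging the upper and lower indices: in the paper it requires (a) the normality of $\phi=\lambda P_{e_n}$, which is the $X=Y=X_{\eta}$ case of Lemma \ref{lemma1} (itself obtained by contracting (\ref{eq:P=0}) with kernel vectors), (b) the identity $B=\phi+\phi^{\ast}$, and (c) the estimate $B_{i\bar i}\geq|T^i_{in}|^2=|a_i|^2$ in the frame diagonalizing $\phi$. None of these steps appear in your proposal.

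Second, and more seriously, the rank bound. You correctly observe that nondegeneracy of the skew pairing is far too weak and that one needs an injection $W^{\bot}\hookrightarrow W$, but you do not construct it, and this construction is the main content of the statement. The paper's mechanism is Lemma \ref{lemma1}: for all $X,Y\in\ker B$ the operators $P_X(e_i)=\sum_j T^j_{iX}e_j$ satisfy $P_XP_Y^{\ast}=P_Y^{\ast}P_X$, hence form a commuting family of normal operators supported on $E=\mathrm{Im}\,T^c$ and can be simultaneously diagonalized. The injection is then $X\mapsto \mathrm{diag}\,(P_X)\in{\mathbb C}^r$ (Lemma \ref{lemma2}); it is injective precisely because $P_X=0$ forces $X\in\ker A$, and its target is $r$-dimensional only because of the simultaneous diagonalization. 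Without that step the natural map $X\mapsto P_X$ lands in $\mathrm{End}(E)$ and yields only $n-r\leq r^2$, which is useless. So the gap is concrete: you are missing Lemma \ref{lemma1} and its diagonalization consequence. The parts you do carry out --- $\ker A\subseteq X_{\eta}^{\bot}$ from skew-symmetry, and the de Rham splitting via $\nabla=\nabla^b$ along $\ker A$ once the kernel inclusions are known --- coincide with the paper's argument.
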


For the sake of simplicity, we will call a {\em BKL} manifold {\em full} if $A>0$ in Definition \ref{irr}, so a complete {\em BKL} manifold is full if and only if it admits no K\"ahler de Rham factor. When a {\em BKL} manifold is not full, the discussion could be reduced to the lower dimensional cases, hence we will basically focus on full ones, where it holds that $\frac{n}{2} \leq r_{\!{\tiny B}} \leq n-1$. The next result studies the case when the rank is maximal.

\begin{theorem} \label{thm2}
Let $(M^n,g)$ be a BKL manifold with $n\geq 4$. If $r_{\!{\tiny B}}=n-1$, then $g$ is Bismut flat and full.
\end{theorem}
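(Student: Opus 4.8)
The plan is to normalize a local unitary frame, read off the consequences of maximal $B$-rank, prove fullness by a dimension-reduction argument, and then force the Bismut curvature to vanish using the parallelism of the torsion. First I would diagonalize $B$. Since $r_{\!{\tiny B}}=n-1$, the positive semi-definite Hermitian form $B$ has a one-dimensional kernel, so I choose a unitary frame $e$ with $\ker B=\mathbb{C}e_n$ and $B=\mathrm{diag}(\lambda_1,\dots,\lambda_{n-1},0)$, $\lambda_a>0$. From $B_{n\bar{n}}=\sum_{q,k}|T^n_{qk}|^2=0$ I get $T^n_{qk}=0$ for all $q,k$; equivalently the image of the Chern torsion lies in $V:=\mathrm{span}(e_1,\dots,e_{n-1})=(\ker B)^{\bot}$. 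By Theorem~\ref{thm1}, $\ker\phi=\ker B=\mathbb{C}e_n$ as well. Since $B$ is $\nabla^b$-parallel, $\mathbb{C}e_n$ and $V$ are $\nabla^b$-parallel distributions and the $\lambda_a$ are constant.

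Next, fullness. Suppose $A\not>0$. Theorem~\ref{thm1} gives $\ker A\subseteq\ker B\cap X_{\eta}^{\bot}=\mathbb{C}e_n\cap X_{\eta}^{\bot}$, so necessarily $\ker A=\mathbb{C}e_n$ and $\eta_n=0$. Then $A_{n\bar{n}}=\sum_{q,k}|T^q_{nk}|^2=0$ forces $T^q_{nk}=0$ for all $q,k$; combined with $T^n_{qk}=0$ this means the index $n$ does not occur in the torsion at all. Consequently $A$, $B$, $\phi$ and $T$ are entirely carried by $V$, i.e.\ they are the torsion data of an $(n-1)$-dimensional BKL structure, whose $B$-rank is therefore at most $(n-1)-1=n-2$ by the general bound $r_{\!{\tiny B}}\leq\dim-1$. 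But this $B$-rank equals $r_{\!{\tiny B}}=n-1$, a contradiction. Hence $A>0$ and $g$ is full.

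Finally, flatness, which is the crux. Using $\nabla^bT=0$ and the Ricci identity $\nabla^b_X\nabla^b_Y T-\nabla^b_Y\nabla^b_X T-\nabla^b_{[X,Y]}T=R^b(X,Y)\cdot T$, I would show that the Bismut curvature annihilates the torsion: $R^b(\cdot,\cdot)\cdot T=0$. Thus each $R^b(e_i,\overline{e_j})$, a skew-Hermitian endomorphism preserving $V$ and $\mathbb{C}e_n$, acts as a derivation of the bracket $[\,\cdot,\cdot]_T$ defined by $T$. The Kähler-like Bianchi identity together with $\nabla^bT=0$ turns the first Bianchi identity for $\nabla^b$ into the purely algebraic Jacobi identity for this bracket, so $(V,[\,\cdot,\cdot]_T)$ is a metric (complex) Lie algebra, and $r_{\!{\tiny B}}=n-1$ says precisely that the bracket has maximal rank on the $n-1$ available directions. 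The main obstacle is the last step: showing that a curvature tensor which (i) satisfies the Kähler-like symmetries $R^b_{i\bar{j}k\bar{l}}=R^b_{k\bar{j}i\bar{l}}=R^b_{i\bar{l}k\bar{j}}$, (ii) annihilates this maximal-rank torsion, and (iii) is compatible with $A>0$, must vanish identically; I expect this to reduce to the statement that the only skew-Hermitian derivations of $(V,[\,\cdot,\cdot]_T)$ consistent with the Kähler symmetry are zero once the $B$-rank is maximal. A cleaner packaging, if the result that Bismut Ricci-flat BKL manifolds are Bismut flat is already available, is to first contract $R^b\cdot T=0$ against $B$ and use $r_{\!{\tiny B}}=n-1$ to deduce Bismut Ricci-flatness, and then invoke that theorem to conclude $R^b=0$.
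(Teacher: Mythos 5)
Your normalization and fullness steps are essentially sound, although the fullness detour is both hand-wavy and unnecessary: the phrase ``they are the torsion data of an $(n-1)$-dimensional BKL structure'' invokes a bound ($r_{\!{\tiny B}}\leq \dim -1$) for a structure you never actually construct as a BKL manifold, whereas fullness follows in one line from the paper's own facts --- $X_{\eta}\in \mbox{ker}B=\mathbb{C}e_n$ and $X_\eta\neq 0$ (non-K\"ahlerness, since $r_{\!{\tiny B}}=n-1>0$), so $\mbox{ker}B\cap X_{\eta}^{\bot}=0$ and Theorem \ref{thm1} gives $\mbox{ker}A=0$. The genuine gap is the flatness step, which is the entire content of the theorem and which you do not prove: you reduce it to an algebraic claim that you only ``expect'' to hold, namely that a K\"ahler-like curvature annihilating a maximal-rank parallel torsion must vanish. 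That claim, as stated, is \emph{false}, and the tell is that your argument never uses $n\geq 4$: in complex dimension $3$, any full BKL threefold (the standard Hermitian structure on a product of two Sasakian $3$-manifolds, by \cite{YZZ}) has $A>0$, maximal $B$-rank $r=2=n-1$, parallel torsion (hence $R^b(\cdot,\cdot)\cdot T=0$) and the K\"ahler-like symmetries, yet is in general \emph{not} Bismut flat --- the paper says exactly this just before its proof of Theorem \ref{thm2}. Your Lie-algebraic reformulation also misfires on its own terms: for a metric Lie algebra the inner derivations $\mathrm{ad}_X$ are skew-symmetric derivations, so ``all skew-Hermitian derivations vanish'' can never be the right statement; the constraint that must do the work is the BKL type condition, which you mention but never exploit. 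The fallback via Ricci flatness does not close the gap either: contracting the identity $R^b(X,\overline{Y})\cdot T=0$ against $\overline{T}$ yields a linear relation between $\mathrm{tr}\bigl(R^b(X,\overline{Y})\circ A\bigr)$ and $\mathrm{tr}\bigl(R^b(X,\overline{Y})\circ B\bigr)$, not the vanishing of $\mathrm{tr}\,R^b(X,\overline{Y})$, and no mechanism is offered (nor can a dimension-free one exist) to upgrade it to Ricci flatness.

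For comparison, the paper's proof shows precisely where $n\geq 4$ enters, and it is the ingredient your proposal is missing. First, Lemma \ref{lemma5}: when $r=n-1$ the eigenvalues $a_1,\ldots,a_{n-1}$ are pairwise \emph{distinct}, so $\theta^b$ and $\Theta^b$ are diagonal and the BKL condition $\,^t\!\varphi\,\Theta^b=0$ forces $\Theta^b_{qq}=\kappa_q\varphi_q\overline{\varphi}_q$. Second --- the crux --- there are no ``isolated roots'': if $a_i$ satisfied $T^i_{jk}=T^j_{ik}=0$ for all $j,k<n$, then (\ref{eq:Psum=0}) would give $\mathrm{Re}(a_i\overline{a}_k)=0$ for every $k\neq i$, $k<n$; since $n-1\geq 3$ at most one such root can exist, the remaining roots are then real multiples of a single complex number, and a largest-negative/smallest-positive sign analysis using Lemma \ref{lemma3} and (\ref{eq:Psum=0}) produces a contradiction. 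Third, once every $i<n$ appears in some $T^j_{ik}\neq 0$ with $i,j,k<n$ necessarily distinct (because $a_j=a_i+a_k$), parallelism of $T$ gives $\theta^b_{ii}+\theta^b_{kk}=\theta^b_{jj}$, hence $\Theta^b_{ii}+\Theta^b_{kk}=\Theta^b_{jj}$, and comparing the $\varphi_i\overline{\varphi}_i$, $\varphi_k\overline{\varphi}_k$, $\varphi_j\overline{\varphi}_j$ components forces $\kappa_i=0$. Your derivation framework is this third step in disguise, but without the diagonality of Lemma \ref{lemma5} and, crucially, the no-isolated-roots argument, it cannot get started: an index $i$ occurring in no torsion component with all indices below $n$ has $\Theta^b_{ii}$ completely unconstrained by $R^b\cdot T=0$, which is exactly what happens in dimension $3$. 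To repair your proof you would need to supply both of these inputs; invoking Theorem \ref{thm3} afterwards would be legitimate (its proof is independent of Theorem \ref{thm2}), but by that point it is no longer needed.
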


Recall that a Hermitian manifold is said to be Calabi-Yau with torsion, or {\em CYT} in short, if its Bismut connection $\nabla^b$ has $SU(n)$ holonomy, that is, if the (first) Bismut Ricci curvature is zero.
A conjecture raised in \cite[Conjecture 2]{YZZ} states that, if $(M^n,g)$  is a compact {\em BKL} manifold without K\"ahler de Rham factor of dimension bigger than one, then it cannot be {\em CYT} unless it is Bismut flat. The $n\leq 3$ case of the conjecture was proved in \cite[Theorem 5]{YZZ}. Here we confirm the conjecture in general.

\begin{theorem} \label{thm3}
Let $(M^n,g)$ be a BKL manifold without any K\"ahler de Rham factor of dimension bigger than one. If the (first) Bismut Ricci curvature vanishes, then the Bismut curvature vanishes (that is, $g$ is Bismut flat).
\end{theorem}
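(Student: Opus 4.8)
The plan is to combine the algebraic rigidity coming from $\nabla^b T=0$ with the positivity and kernel information recorded in Theorem \ref{thm1} and the maximal-rank case of Theorem \ref{thm2}. First I would record the reduction to the full case. By Theorem \ref{thm1} the failure of $A>0$ is detected exactly by the presence of a K\"ahler de Rham factor; under the present hypothesis every such factor has complex dimension one. A complex one-dimensional K\"ahler de Rham factor is a Riemann surface on which $\nabla^b$ agrees with the Levi-Civita connection, so the vanishing of the (first) Bismut Ricci curvature forces it to be Ricci flat, hence flat, hence Bismut flat; such a factor contributes nothing to the Bismut Ricci curvature or to $R^b$. Splitting these factors off, I may assume $(M^n,g)$ is full, so that $A>0$ and $\tfrac n2\le r_{\!{\tiny B}}\le n-1$.

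Next, since $R^b$ is K\"ahler-like it enjoys the full pair symmetry of a K\"ahler curvature tensor, whence all of its Ricci contractions coincide; thus the CYT hypothesis is equivalent to the vanishing of the single Hermitian form $\mathrm{Ric}^b_{i\overline j}=\sum_k R^b_{i\overline j k\overline k}$. The engine of the proof is that $\nabla^b T=0$ makes $R^b$ a fixed bilinear expression in $T$ and $\overline T$ with metric coefficients; in particular $R^b$ is $\nabla^b$-parallel, and its trace $\mathrm{Ric}^b$ is a quadratic expression in the parallel tensors $A$, $B$ and $\phi$. I would compute this expression explicitly from the structure equations, so that the CYT condition $\mathrm{Ric}^b=0$ becomes an algebraic identity relating $A$, $B$ and $\phi$. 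Here the elementary but crucial fact $\mathrm{tr}\,A=\mathrm{tr}\,B=|T|^2$, together with the positive semidefiniteness of $A$ and $B$ and the inclusions $\ker A\subseteq\ker B\cap X_{\eta}^{\bot}$ and $\ker\phi=\ker B$ from Theorem \ref{thm1}, should pin down $A$, $B$ and $\phi$ rigidly.

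Finally I would propagate the vanishing of $\mathrm{Ric}^b$ to the full tensor $R^b$. Since $B$ is parallel, its kernel and image are $\nabla^b$-parallel distributions, and the structural splitting of a BKL manifold along $\ker B$ reduces matters, by induction on dimension, to the extremal situation $r_{\!{\tiny B}}=n-1$, which is Bismut flat by Theorem \ref{thm2}; the base of the induction and the full case feed back through the algebraic identity of the previous step. The main obstacle is exactly this last passage from the trace to the whole curvature: for a general K\"ahler-like tensor the Ricci form can vanish without the curvature vanishing (Calabi--Yau metrics), so the argument must use decisively that the curvature is built from the parallel torsion and that $A$, $B$ are positive. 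I expect the technical heart to be a positivity or Bochner-type identity expressing $|R^b|^2$ (equivalently, a norm of the Jacobiator of the torsion bracket) through $\mathrm{Ric}^b$ and the nonnegative tensors $A$ and $B$, so that $\mathrm{Ric}^b=0$ forces $R^b=0$ and hence Bismut flatness.
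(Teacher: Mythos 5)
Your reduction to the full case is fine and matches the paper: a one\,-dimensional K\"ahler de Rham factor is forced to be flat by the CYT hypothesis, so one may assume $A>0$. But from that point on the proposal has a genuine gap, and in fact rests on two claims that are not true. First, $\nabla^bT=0$ does \emph{not} make $R^b$ an algebraic (bilinear) expression in $T$ and $\overline T$, so $\mathrm{Ric}^b$ is not a quadratic identity in $A$, $B$, $\phi$: what the parallel torsion plus the Bianchi identity give is the purely torsion identity (\ref{eq:P=0}), while the curvature retains genuinely non-algebraic data (the local functions $\kappa_i$ appearing in $\Theta^b_{ii}=\kappa_i\varphi_i\wedge\overline{\varphi}_i$ are not determined by the torsion --- compare the computation of $\widetilde\Theta^b$ in the proof of Theorem \ref{thm4}, where $\kappa_i$ enters as independent curvature input). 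Second, the proposed induction ``splitting along $\ker B$'' is not available: only $\ker A$ gives a de Rham factor (that is the content of Theorem \ref{thm1}), whereas $\ker B$ contains $X_\eta$ and, in the full case, does not induce any metric splitting (e.g.\ the pluriclosed twisted products have $\ker B\neq 0$ but are irreducible as Riemannian manifolds). So neither the ``algebraic identity'' step nor the inductive reduction to $r_{\!{\tiny B}}=n-1$ can be carried out, and you yourself flag the passage from $\mathrm{Ric}^b=0$ to $R^b=0$ as the main obstacle without supplying it; the hoped-for Bochner/positivity identity for $|R^b|^2$ is not established and is not the mechanism.

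The actual mechanism in the paper is frame-theoretic and much more rigid than a trace argument. By Lemma \ref{lemma6} and the discussion preceding Lemma \ref{s-cmpt}, in a strictly $\phi$-compatible frame the connection matrix $\theta^b$ is diagonal over $E=\mathrm{span}\{e_1,\dots,e_r\}$ and vanishes on the $N$ block, so $\Theta^b$ is \emph{diagonal} with $\Theta^b_{\alpha\alpha}=0$ for $\alpha>r$. The K\"ahler-like condition $\,^t\!\varphi\wedge\Theta^b=0$ applied to a diagonal curvature matrix forces each $\Theta^b_{ii}=\kappa_i\,\varphi_i\wedge\overline{\varphi}_i$. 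Hence $\mathrm{tr}(\Theta^b)=\sum_{i=1}^r\kappa_i\varphi_i\wedge\overline{\varphi}_i$ is a sum of linearly independent $(1,1)$-forms, and its vanishing kills every $\kappa_i$, i.e.\ the full curvature. This is why the trace controls the whole tensor here, in contrast to the general Calabi--Yau situation you rightly worry about: the point is not positivity but the fact that each diagonal curvature entry is a single decomposable $(1,1)$-form in a distinguished direction.
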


\begin{remark}
It also holds from the proof of Theorem \ref{thm3} that, if the (first) Bismut Ricci curvature is semipositive (or seminegative), then the Bismut bisectional curvature is semipositive (or seminegative) for BKL manifolds without any K\"ahler de Rham factor of dimension bigger than one.
\end{remark}

For $q$ non-K\"ahler {\em BKL} surfaces $N_1,N_2,\cdots,N_q$ and an appropriate invertible $q \times q$ matrix $D$, one can define a family of Hermitian metrics, twisted by $D$, on the complex product manifold $\Pi_{i=1}^q N_i = N_1\times\cdots\times N_q$, which are all {\em BKL}. These metrics are not product metrics in general. These Hermitian manifolds will be called the {\em pluriclosed twisted product}, and we will denote them by $N_1 \times_{\! {\tiny D}} \cdots \times_{\! {\tiny D}} N_q$, which will be introduced in Definition \ref{p-twisted} in detail. The study of the minimal $B$-rank case follows, which also gives us the classification of {\em BKL} manifolds of dimension $4$.

\begin{theorem} \label{thm4}
Let $(M^n,g)$ be a full BKL manifold with $n\geq 4$. If $r_{\!{\tiny B}}=\frac{n}{2}$ and $g$ is complete, then the universal cover $\widetilde{M}$ of $M^n$ is the pluriclosed twisted product of $r_{\!{\tiny B}}$ non-K\"ahler BKL surfaces. In particular, for a complete, simply-connected full BKL manifold $(M^4,g)$, it is either Bismut flat, or it is the pluriclosed twisted product of two BKL surfaces.
\end{theorem}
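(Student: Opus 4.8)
The plan is to reduce the final ``in particular'' claim to the body of the theorem together with Theorem~\ref{thm2}, and then to prove the general minimal-rank assertion. A full BKL fourfold satisfies $2=\frac n2\le r_{\!{\tiny B}}\le n-1=3$; if $r_{\!{\tiny B}}=3$ then Theorem~\ref{thm2} gives that $g$ is Bismut flat, while if $r_{\!{\tiny B}}=2=\frac n2$ the main assertion applies with $q=2$. So it suffices to prove, under $A>0$, $r_{\!{\tiny B}}=\frac n2$, and completeness, that $\widetilde M$ is a pluriclosed twisted product of $q=r_{\!{\tiny B}}$ non-K\"ahler BKL surfaces.

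First I would extract parallel data from $B$. Since $g$ is BKL we have $\nabla^bT=0$, so $A$, $B$, $\phi$ are $\nabla^b$-parallel; writing $V_+=\Ran B$ and $V_0=\ker B$ gives $\nabla^b$-parallel subbundles of $T^{1,0}M$ with $\dim V_+=\dim V_0=q=\frac n2$, and the very definition of $B$ shows that every upper index of the Chern torsion lies in $V_+$, i.e. $T^i_{jk}=0$ whenever $e_i\in V_0$. Encoding the torsion as the $(2,0)$-forms $\tau^\alpha=\sum_{j<k}T^\alpha_{jk}\,\varphi^j\wedge\varphi^k$ for $\alpha\in V_+$, one sees that $B_{i\overline{j}}$ is, up to a constant, the Gram matrix of the family $\{\tau^\alpha\}$, whence $\dim\langle\tau^\alpha\rangle=r_{\!{\tiny B}}=q$. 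The crucial step is then to show that $\mathcal T=\langle\tau^\alpha\rangle$ is \emph{simultaneously decomposable}: there is a splitting $T^{1,0}M=\bigoplus_{a=1}^q W_a$ into rank-$2$ subbundles, each meeting $V_0$ and $V_+$ in a complex line, with $\mathcal T=\langle\omega_1,\dots,\omega_q\rangle$ where $\omega_a$ is the area form of $W_a$. Here $A>0$ guarantees the $\tau^\alpha$ have no common null direction and the rank hypothesis fixes $\dim\mathcal T=q$, but (as elementary examples already show in dimension $4$) these alone do \emph{not} force a common block splitting; the decisive input is the package of algebraic identities from $\nabla^bT=0$ together with the pluriclosed and K\"ahler-like curvature relations, which I expect to yield a Bianchi-type commutation constraint among the $\tau^\alpha$ that pins down the blocks. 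This simultaneous-decomposability step is the technical heart of the argument and the one I expect to be hardest.

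Granting the splitting, each $W_a$ is $\nabla^b$-parallel, being cut out by the parallel data $B$ and $\mathcal T$; and since the surviving torsion now lies inside the individual blocks, each $W_a\oplus\overline{W_a}$ is an integrable $J$-invariant distribution whose leaves are Hermitian surfaces. The induced structure on a leaf again has parallel torsion and is pluriclosed, hence is BKL, and it is non-K\"ahler because the leaf carries the nonzero form $\omega_a$; by the $n=2$ theory it is therefore a Vaisman, i.e. non-K\"ahler BKL, surface $N_a$. Using completeness of $g$ and simple-connectedness of $\widetilde M$, a de~Rham--type splitting for the Bismut connection with parallel torsion then identifies the complex manifold $\widetilde M$, carrying the product of the leaf structures, with $\Pi_{a=1}^q N_a$.

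Finally I would recover the twisting. The blocks are canonically paired by $\mathcal T$, but the way the factors' distinguished directions are glued need not be the product one: comparing the torsion basis $\{\tau^\alpha\}_{\alpha\in V_+}$ with the decomposable basis $\{\omega_a\}$ produces an invertible $q\times q$ matrix $D$ with $\tau^\alpha=\sum_a D_{\alpha a}\,\omega_a$, equivalently encoded by the action of $\phi$ on $V_+$, which by Theorem~\ref{thm1} has kernel exactly $V_0=\ker B$ and thus acts invertibly there. Checking that $D$ meets the admissibility required in Definition~\ref{p-twisted} and that the reconstructed Hermitian metric agrees then identifies $\widetilde M$ with $N_1\times_{\! {\tiny D}}\cdots\times_{\! {\tiny D}} N_q$. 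The conclusion is a twisted rather than an ordinary product precisely because this $D$ (equivalently $\phi|_{V_+}$) need not act as a scalar on each factor; verifying that every admissible $D$ so arises is the remaining delicate point.
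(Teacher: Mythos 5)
Your outline follows the same strategic arc as the paper (pair each $\Ran B$ direction with a $\ker B$ direction to get rank-$2$ blocks, show each block integrates to a non-K\"ahler BKL surface, then read off the twist matrix), and your reduction of the ``in particular'' clause to Theorem \ref{thm2} plus the main assertion is exactly right. But the step you yourself flag as the technical heart --- the ``simultaneous decomposability'' of the torsion forms into a common block splitting --- is left as an expectation rather than a proof, and this is a genuine gap: as you note, $A>0$ and $\dim\mathcal T=q$ alone do not force it. The paper closes this gap in two concrete moves that your proposal does not supply. First, Lemma \ref{lemma1} shows $P_XP_Y^\ast=P_Y^\ast P_X$ for $X,Y\in\ker B$, which permits a \emph{simultaneous} diagonalization of all the endomorphisms $P_{e_\alpha}$, $e_\alpha\in\ker B$; this is what produces the $\phi$-compatible frame in which $T^j_{i\alpha}=\delta_{ij}b_{\alpha i}$, i.e.\ the mixed torsion is already in block form. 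Second --- and this is where the hypothesis $r_{\!{\tiny B}}=\frac n2$ enters decisively --- the matrix $\hat b=(b_{\alpha i})$ is then a \emph{square} $r\times r$ matrix, invertible by fullness (Lemma \ref{lemma2}), and the identity $(b_{\alpha i}+b_{\alpha k}-b_{\alpha j})\overline{T^j_{ik}}=0$ of Lemma \ref{lemma3} forces $T^j_{ik}=0$ for all $i,j,k\le r$: otherwise one column of $\hat b$ would be the sum of two others. Only after the ``internal'' torsion on $\Ran B$ is killed does each $\tau^j=2\varphi_j\wedge\bigl(\sum_\alpha b_{\alpha j}\varphi_\alpha\bigr)$ become decomposable and the blocks $F_j=\mathrm{span}\{e_j,\,-\sum_\alpha\overline b_{\alpha j}e_\alpha\}$ well defined. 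Your proposed route via a ``Bianchi-type commutation constraint'' is plausible in spirit but is not carried out, and without the square-invertibility argument it is not clear it can be.

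A secondary inaccuracy: the splitting $\widetilde M\cong\Pi_a N_a$ is \emph{not} a de Rham (metric) splitting for $g$ --- the paper is explicit that the product is in general not a metric product. What one actually shows is that each $F_a$ is a holomorphic distribution and $F_a\oplus\overline{F_a}$ is closed under Lie bracket (computed via the Levi-Civita derivatives of $e_i$ and $X_i$), giving a holomorphic product decomposition; the product metric $h=\sqrt{-1}\sum_a(\varphi_a\overline\varphi_a+\psi_{r+a}\overline\psi_{r+a})$ built from the dual coframe of the $X_a$ is a pluriclosed \emph{modification} of $g$, and the admissibility condition on $D$ in Definition \ref{p-twisted} is then verified from $\sum_\alpha\mathrm{Re}(b_{\alpha i}\overline b_{\alpha k})=0$, which again comes out of (\ref{eq:Psum=0}) only after $T^j_{ik}=0$ on $\Ran B$ is known. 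So the twist-matrix bookkeeping in your last paragraph is the right idea, but it sits downstream of the unproved block-structure step.
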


For {\em BKL} manifolds of dimension $5$, we see a strange resemblance to the $3$-dimensional case. It was proved in \cite{YZZ} that any full {\em BKL} threefold is always the standard Hermitan structure on the product of two Sasakian $3$-manifolds. Similarly, one could always construct {\em BKL} fivefolds for any three given Sasakian $3$-manifolds. Let $L_i$ be a Sasakian $3$-manifold, for each $1\leq i\leq 3$. Then there exists a global unit vector field $Y_i$ on the Riemannian $3$-manifold $L_i$, such that $Y_i$ is Killing, namely, $\langle X, \nabla_ZY_i\rangle + \langle Z, \nabla_XY_i\rangle =0$ holds or any vector fields $X$, $Z$, and $\frac{1}{c_i}\nabla Y_i$ gives an orthogonal complex structure on $H_i$ that is compatible with the metric on $L_i$. Here $c_i$ is some positive constant and $H_i$ is the orthogonal complement distribution of $Y_i$ on $L_i$.

Let $M=L_1\times L_2\times L_3 \times {\mathbb R}$ be the Riemannian product manifold, with $g$ the product metric. Denote by $N$ the distribution spanned by $Y_1$, $Y_2$, $Y_3$ and $Y_4$, where $Y_4$ is the (positive) unit vector in the ${\mathbb R}$ factor. For any given $4 \times 4$ skew-symmetric and orthogonal matrix $D=(d_{ij})$, there exists a compatible almost complex structure $J$ on $M$, defined by $D$, namely, $JY_i=\sum_j d_{ij}Y_j$ when restricted on the distribution $N$, while on each $H_i$, $J$ is equal to $\frac{1}{c_i} \nabla Y_i$. That way one gets a Hermitian fivefold $(M, g, J)$. It turns out that this $J$ is always integrable, and the Hermitian metric $g$ is {\em BKL}. We will call them {\em multiple product of Sasakian $3$-manifolds} in Definition \ref{mprod}. Then our classification theorem says that {\em BKL} manifolds in dimension $5$ are essentially of this type:

\begin{theorem} \label{thm5}
Let $(M^5,g)$ be a complete, simply-connected full BKL manifold.  Then either it is Bismut flat, or it is a multiple product of Sasakian $3$-manifolds $L_1\times L_2\times L_3\times {\mathbb R}$.
\end{theorem}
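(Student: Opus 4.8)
The plan is to first pin down the $B$-rank to a single value and then construct the Sasakian factors out of the Bismut-parallel torsion data. Since $(M^5,g)$ is full, we have $A>0$, so Theorem~\ref{thm1} gives $r_{\!{\tiny B}}\geq \tfrac{n}{2}=\tfrac{5}{2}$, forcing the integer $r_{\!{\tiny B}}$ into $\{3,4\}$ because $r_{\!{\tiny B}}\leq n-1=4$. If $r_{\!{\tiny B}}=4$, then Theorem~\ref{thm2} makes $g$ Bismut flat, which is the first alternative. Hence the whole content reduces to the case $r_{\!{\tiny B}}=3$, on which I would concentrate.

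Next I would exploit that $B$, $\phi$, and $A$ are all $\nabla^b$-parallel. Since $B$ is a nonnegative $\nabla^b$-parallel Hermitian endomorphism of $T^{1,0}M$, its eigenvalues are constant and its eigenspaces are $\nabla^b$-parallel and $g$-orthogonal. With $r_{\!{\tiny B}}=3$ the kernel $V:=\ker B$ has complex rank $2$, and Theorem~\ref{thm1} gives $V=\ker\phi$ as well. I would regard $V$ as the \emph{vertical} space (which will carry the Reeb directions and one flat direction) and $V^{\bot}$, of complex rank $3$, as the \emph{horizontal} directions. Choosing a unitary frame $e_1,\dots,e_5$ adapted to $V^{\bot}\oplus V$ with $e_4,e_5$ spanning $V$, the vanishing $B_{i\overline{j}}=0$ for indices in $V$ means the vertical directions never occur as outputs of the Chern torsion; combined with $\nabla^bT=0$, pluriclosedness, and $\phi_i^j=0$ on $V$, this should force a normal form in which each horizontal line $e_a$ $(a=1,2,3)$ couples only to itself through the vertical space $V$, the block-diagonal pattern characteristic of a product of Sasakian $3$-manifolds.

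From this normal form I would reconstruct the factors. Each nonzero horizontal eigendirection of $B$ should yield one real horizontal plane $H_i$ together with one real Reeb field $Y_i\in V$, whose Levi-Civita derivative satisfies $\nabla Y_i=c_iJ$ on $H_i$ with $c_i>0$ dictated by the corresponding $B$-eigenvalue; this is exactly the relation making $L_i$ Sasakian, with $Y_i$ Killing. The three Reeb fields $Y_1,Y_2,Y_3$ span only a real $3$-dimensional subspace of the real $4$-dimensional $V$, so one residual real direction $Y_4$ survives, and the normal form should force $Y_4$ to be Levi-Civita parallel with no torsion contribution, i.e.\ a flat $\mathbb{R}$-factor forced by parity. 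The genuinely delicate point is the case of coincident $B$-eigenvalues, where a horizontal eigenspace has complex rank $>1$: there the splitting into individual Sasakian lines cannot be read off $B$ alone and must instead be extracted from the integrability of $J$ together with the parallel tensor $\phi$, showing that the horizontal space nonetheless breaks into three mutually orthogonal $\nabla^b$-parallel complex lines.

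Finally I would globalize. Completeness and simple-connectedness let me invoke the de Rham decomposition theorem: the $g$-orthogonal, Levi-Civita-parallel distributions $H_i\oplus\langle Y_i\rangle$ and $\langle Y_4\rangle$ integrate to a Riemannian product $L_1\times L_2\times L_3\times\mathbb{R}$ with each $L_i$ a complete Sasakian $3$-manifold. The complex structure $J$ need not respect this product; it mixes the four lines inside $N=\mathrm{span}(Y_1,Y_2,Y_3,Y_4)$, and I would record this mixing through $D=(d_{ij})$ defined by $JY_i=\sum_j d_{ij}Y_j$. The conditions $J^2=-\mathrm{id}$ and $J$ orthogonal translate precisely into $D$ being skew-symmetric and orthogonal, while $J|_{H_i}=\tfrac{1}{c_i}\nabla Y_i$ is already built in; this exhibits $(M^5,g)$ as a multiple product of Sasakian $3$-manifolds in the sense of Definition~\ref{mprod}, completing the dichotomy. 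The main obstacle throughout is the passage from the Bismut-parallel (but not a priori Levi-Civita-parallel) tensorial data to an honest Levi-Civita de Rham splitting, and in particular the separation of the horizontal space into three individual Sasakian lines when the $B$-eigenvalues coincide.
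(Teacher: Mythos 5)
Your reduction to $r_{\!{\tiny B}}\in\{3,4\}$ via Theorems \ref{thm1} and \ref{thm2}, and your overall plan for $r_{\!{\tiny B}}=3$ (build Reeb fields $Y_i$ out of the torsion coefficients coupling each horizontal line to $\ker B$, check orthogonality, find the residual parallel direction $Y_4$, and invoke de Rham), do match the paper. But there is a genuine gap in the $r_{\!{\tiny B}}=3$ case: you assert that the {\em BKL} conditions ``should force a normal form in which each horizontal line couples only to itself through the vertical space,'' i.e.\ that all purely horizontal torsion components $T^j_{ik}$ with $i,j,k\le 3$ vanish. This is not automatic, and the dichotomy in the theorem has a second, independent source of Bismut flatness hiding here. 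The paper must treat separately the possibility $T^j_{ik}\neq 0$ for some $i,j,k\le 3$: by Lemma \ref{lemma3} this forces $b_{\alpha j}=b_{\alpha i}+b_{\alpha k}$ for $\alpha=4,5$, whence the indices are distinct; then (\ref{eq:Psum=0}) applied to the three pairs yields $|T^3_{12}|^2=\sum_\alpha |b_{\alpha 1}|^2=\sum_\alpha |b_{\alpha 2}|^2=\sum_\alpha |b_{\alpha 3}|^2$, so $|T^3_{12}|$ is a global constant, and differentiating $T^3_{12}$ gives $\theta^b_{11}+\theta^b_{22}=\theta^b_{33}$, which together with $\Theta^b_{qq}=\kappa_q\varphi_q\overline{\varphi}_q$ forces $\Theta^b=0$. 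A manifold in this subcase is Bismut flat and lands in the first alternative; your argument as structured would instead try to exhibit it as a multiple product of Sasakian $3$-manifolds, and the claimed normal form is simply unjustified without this case analysis.

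A secondary point: the difficulty you flag about coincident $B$-eigenvalues is real but is resolved in the paper not by integrability of $J$ or by $\phi$ alone, but by Lemma \ref{lemma1} --- the operators $P_X$ for $X\in\ker B$ are normal and satisfy $P_XP_Y^{\ast}=P_Y^{\ast}P_X$, which permits simultaneous diagonalization $T^j_{i\alpha}=\delta_{ij}b_{\alpha i}$ (the $\phi$-compatible frame) regardless of multiplicities; Lemma \ref{lemma6} and the strictly $\phi$-compatible refinement then make $\theta^b$ diagonal and the $b_{\alpha i}$ constant. Once that frame is in place, the remainder of your outline --- the vectors $X_i=\sum_\alpha\overline{b}_{\alpha i}e_\alpha$, the mutual orthogonality of $Y_1,Y_2,Y_3$ from (\ref{eq:abik}), the Levi-Civita parallel $Y_4$, the de Rham splitting $L_1\times L_2\times L_3\times\mathbb{R}$, and the skew-symmetric orthogonal matrix $D$ recording $J$ on the Reeb directions --- coincides with the paper's construction.
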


\vs

\section{The torsion of Bismut K\"ahler-like manifolds}\label{tor}
Let $(M^n,g)$ be a Hermitian manifold of complex dimension $n\geq 2$. Denote by $\nabla$, $\nabla^c$, and $\nabla^b$ respectively the Levi-Civita (Riemannian), Chern, and Bismut connection of the metric $g$. Denote by $T^c=T$, $R^c$ the torsion and curvature of $\nabla^c$, and by $T^b$, $R^b$ the torsion and curvature of $\nabla^b$. Under a unitary frame $e$ with dual coframe $\varphi$, the components of $T^c$ are given by
\begin{equation}\label{eq:2.1}
T^c(e_i, \overline{e}_j)=0, \ \ \ T^c(e_i, e_j) = 2 \sum_{k=1}^n T^k_{ij} e_k.
\end{equation}
For Chern connection $\nabla^c$, let us denote by $\theta$, $\Theta$ the matrices of connection and
curvature, respectively, and by $\tau$ the column vector of the torsion $2$-forms, all under the local frame $e$.
Then the structure equations and Bianchi identities are
\begin{eqnarray*}
d \varphi & = & - \ ^t\!\theta \wedge \varphi + \tau,  \\
d  \theta & = & \theta \wedge \theta + \Theta. \\
d \tau & = & - \ ^t\!\theta \wedge \tau + \ ^t\!\Theta \wedge \varphi, \\
d  \Theta & = & \theta \wedge \Theta - \Theta \wedge \theta.
\end{eqnarray*}
The entries of $\Theta$ are all $(1,1)$-forms, while the entries of the column vector $\tau $ are all $(2,0)$-forms.
Similar symbols such as $\theta^b,\Theta^b$ and $\tau^b$ are applied to Bismut connection. The components of $\tau$ are exactly $T_{ij}^k$ defined in (\ref{eq:2.1}):
\[ \tau_k = \sum_{i,j=1}^n T_{ij}^k \varphi_i\wedge \varphi_j \ = \sum_{1\leq i<j\leq n} 2 \ T_{ij}^k \varphi_i\wedge \varphi_j.\]
Under the frame $e$, express the Levi-Civita (Riemannian) connection $\nabla$ as
$$ \nabla e = \theta_1 e + \overline{\theta_2 }\overline{e} ,
\ \ \ \nabla \overline{e} = \theta_2 e + \overline{\theta_1
}\overline{e} ,$$
thus the matrices of connection and curvature for $\nabla $ become:
$$ \hat{\theta } = \begin{bmatrix} \theta_1 & \overline{\theta_2 }\, \\ \theta_2 & \overline{\theta_1 }\,  \end{bmatrix}\! , \ \  \  \hat{\Theta } = \begin{bmatrix} \Theta_1 & \overline{\Theta}_2  \\ \Theta_2 & \overline{\Theta}_1   \end{bmatrix}, $$
 where
\begin{eqnarray*}
\Theta_1 & = & d\theta_1 -\theta_1 \wedge \theta_1 -\overline{\theta_2} \wedge \theta_2, \\
\Theta_2 & = & d\theta_2 - \theta_2 \wedge \theta_1 - \overline{\theta_1 } \wedge \theta_2,\\
d\varphi & = & - \ ^t\! \theta_1 \wedge \varphi - \ ^t\! \theta_2
\wedge \overline{\varphi } .
\end{eqnarray*}
As $e$ is unitary, both $\theta_2 $ and $\Theta_2$ are skew-symmetric, while $\theta$, $\theta_1$, $\theta^b$, or $\Theta$, $\Theta_1$, $\Theta^b$ are all skew-Hermitian. Consider the $(2,1)$-tensor $\gamma =\frac{1}{2}(\nabla^b -\nabla^c)$ introduced in \cite{YZ}. Its representation under the frame $e$ is a matrix of $1$-forms, which, by an abuse of notation, we will also denote by $\gamma$. Then it holds from \cite[Lemma 2]{WYZ} that
\begin{equation}\label{tht1}
\gamma = \theta_1 - \theta .
\end{equation}
Denote the decomposition of $\gamma$ into $(1,0)$ and $(0,1)$ parts by $\gamma = \gamma ' + \gamma ''$.
As observed in \cite{YZ}, when $e$ is unitary, $\gamma $ and $\theta_2$ take the following simple forms
\begin{equation}
(\theta_2)_{ij} = \sum_{k=1}^n \overline{T^k_{ij}} \varphi_k, \ \ \ \ \gamma_{ij} = \sum_{k=1}^n ( T_{ik}^j \varphi_k - \overline{T^i_{jk}} \overline{\varphi}_k ), \label{gm+tht2}
\end{equation}
while for general frames the above formulae will have the matrix $(g_{i\overline{j}})=(\langle e_i,\overline{e}_j \rangle )$ and its inverse involved. By (\ref{gm+tht2}), we get the expression of the components of the torsion $T^b$ under the unitary frame $e$
\begin{equation} \label{Tb}
T^b(e_i,e_j) = -2 \sum_{k=1}^n T^k_{ij}e_k, \ \ \ T^b(e_i, \overline{e}_j) = 2\sum_{k=1}^n \big( T^j_{ik}\overline{e}_k - \overline{T^i_{jk}} e_k \big).
\end{equation}
As usual, the curvature tensor $R^D$ of a linear connection $D$ on a Hermitian manifold $(M,g)$ is defined by
$$ R^D(x,y,z,w) = g( R^D_{xy}z, \ w ) = g(D_xD_yz-D_yD_xz - D_{[x,y]}z, \ w ), $$
where $x,y,z,w$ are tangent vectors in $M$. We will also write it as $R^D_{xyzw}$ for brevity. It is always skew-symmetric with respect to the first two positions, and is also skew-symmetric with respect to its last two positions if the connection is metric, namely, if $Dg=0$. Under a $(1,0)$-frame $e$, the components of the Chern, Bismut and Riemannian curvature tensors are given by
\begin{equation*}
R^c_{i\overline{j}k\overline{\ell}}  =  \sum_{p=1}^n \Theta_{kp}(e_i,
\overline{e}_j)g_{p\overline{\ell}}, \ \ \  \ \ R^b_{abk\overline{\ell}}  =  \sum_{p=1}^n \Theta^b_{kp}(e_a,
e_b)g_{p\overline{\ell}}, \ \ \ \ \ R_{abcd} = \sum_{f=1}^{2n} \hat{\Theta}_{cf}(e_a,e_b)g_{fd},
\end{equation*}
where $i,j,k,\ell,p$ range from $1$ to $n$, while $a,b,c,d,f$ range from $1$ to $2n$ with
$e_{n+i}=\overline{e}_i$, and $g_{ab}=g(e_a,e_b)$. For convenience, we will use a unitary frame $e$ throughout, where the curvature formulae above have simplified form.

For the remaining  part of this section, we will assume that $g$ is Bismut K\"ahler-like, or {\em BKL} for short, which means that the curvature tensor $R^b$ of $\nabla^b$ satisfies the symmetry condition
\[R^b_{ijk\overline{\ell}}=0\quad \text{and}\quad R^b_{i\overline{j}k\overline{\ell}}= R^b_{k\overline{j}i\overline{\ell}},\]
for any $1\leq i,j,k,\ell\leq n$ under any unitary frame $e$, or equivalently, as in \cite[Lemma 4]{ZZ},
\[ ^{t}\!\varphi \wedge \Theta^b=0.\]

We know from our foregoing work \cite{ZZ} and \cite{YZZ} that the {\em BKL} condition is equivalent to the Chern torsion tensor (or equivalently the torsion of any Gauduchon connection) being $\nabla^b$-parallel, plus the pluriclosedness of the metric. When we choose a local unitary frame $e$ and its dual coframe $\varphi$, with notations introduced above, under the {\em BKL} assumption, we have from \cite{ZZ} the equalities $T^j_{ik,\,\ell} = T^j_{ik, \,\overline{\ell}} =0$, $\,\sum_q \eta_q T^q_{ik}=0$, and
\begin{equation}
 \sum_q \{ T^q_{ik} \overline{ T^q_{j\ell} } + T^j_{iq} \overline{ T^k_{\ell q} } + T^{\ell}_{kq} \overline{ T^i_{jq} } - T^{\ell}_{iq} \overline{ T^k_{jq} } - T^j_{kq} \overline{ T^i_{\ell q} } \} =0 \label{eq:P=0}
 \end{equation}
 for any indices $i$, $j$, $k$, $\ell$. Here the indices after comma stand for the covariant derivatives with respect to $\nabla^b$, while $\eta$ stands for the Gauduchon torsion $1$-form $\eta =\sum_k \eta_k \varphi_k $, where
$ \eta_k = \sum_{i} T^i_{ik}$.
Also, {\em BKL} manifolds always satisfy $|T|^2=2|\eta|^2$ and $B=\phi + \phi^{\ast}$ by \cite{ZZ}, where
$$ A_{i\overline{j}} = \sum_{r,s} T^r_{is} \overline{T^r_{js}} , \
 \ \  B_{i\overline{j}} = \sum_{r,s} T^j_{rs} \overline{T^i_{rs}}, \ \ \ \phi_i^j = \sum_q T^j_{iq} \overline{\eta}_q  , $$
and $X_{\!\eta} = \sum_k \overline{\eta}_k e_k$ is a globally defined holomorphic vector field on $M^n$ which is parallel under $\nabla^b$ as shown in \cite[Theorem 2]{YZZ}. It has constant norm $|X_{\!\eta}| = |\eta| = \lambda \geq 0$. If we let $i=j$ and $k=\ell$ in (\ref{eq:P=0}), we get
\begin{equation} \label{eq:Psum=0}
 \sum_q \{ | T^q_{ik} |^2 + T^i_{iq} \overline{ T^k_{k q} } + T^{k}_{kq} \overline{ T^i_{iq} } - |T^{k}_{iq}|^2 - |T^i_{kq}|^2 \} =0
\end{equation}
for any $i$, $k$. Note that since $T$ is parallel under $\nabla^b$, so are $\eta$, $\phi$, $A$, and $B$. Also, we may regard $\phi$, $A$, or $B$ as (complex) smooth endomorphisms on $T^{1,0}M$, by sending $e_i$ to
 $$ \phi (e_i) = \sum_j \phi_i^j e_j, \ \ \ A(e_i) = \sum_j A_{i\overline{j}} \,e_j, \ \ \ B(e_i) = \sum_j B_{i\overline{j}} \,e_j, $$
respectively. The latter two are self-adjoint with respect to the Hermitian metric $g$ and are positive semi-definite. By an abuse of notation, we will denote these endomorphisms by the same letter. Since they are $\nabla^b$-parallel, their eigenvalues are global constants and their eigenspaces are all parallel under $\nabla^b$.

Now let us assume that $g$ is not K\"ahler, so $T\neq 0$ hence $\eta \neq 0$. Let $\mbox{ker}B$ be the zero eigenspace of $B$ in $T^{1,0}M$. That is, $X=\sum_i X_i e_i$ is in $\mbox{ker}B$ if and only if
$ B(X) = \sum_{i,j} X_i B_{i\overline{j}} e_j =0$, or equivalently, $\sum_i \overline{X}_i T^i_{rs}=0$ for any $r,s$. Since $\sum_q\eta_q T^q_{ik}=0$ for any $i,k$, we know that $X_{\!\eta}\in \mbox{ker}B$, hence $\mbox{ker}B \neq 0$. For any $X\in \mbox{ker}B$, let us denote by $P_X$ the endomorphism on $T^{1,0}M$ given by
$$ P_X (e_i) = \sum_j T^j_{i X}e_j = \sum_{j,k} T^j_{ik} X_k e_j .$$

\begin{lemma}\label{lemma1}
Let $(M^n,g)$ be a BKL manifold. For any $X,Y \in \mbox{ker}B$, the endomorphisms $P_X$ and $P_Y$ satisfy
$$ P_X P_Y^{\ast} = P_Y^{\ast} P_X,$$
where $P_Y^{\ast}$ denotes the adjoint (or conjugate transpose) of $P_Y$.
\end{lemma}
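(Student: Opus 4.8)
The plan is to fix a local unitary frame, translate the operator identity into a pointwise relation among the torsion components $T^k_{ij}$, and then recognize that relation as a single contraction of the master identity (\ref{eq:P=0}).

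First I would record the matrix entries. With the convention $P_X(e_i)=\sum_j (P_X)_{ji}\,e_j$ the defining formula gives $(P_X)_{ji}=\sum_k T^j_{ik}X_k$, and since the frame is unitary the adjoint is the conjugate transpose, $(P_Y^{\ast})_{ji}=\sum_k \overline{T^i_{jk}}\,\overline{Y_k}$. Multiplying these out yields
\[
\big(P_XP_Y^{\ast}-P_Y^{\ast}P_X\big)_{ji}
=\sum_{m,k,\ell}\big(T^j_{mk}\,\overline{T^m_{i\ell}}-T^m_{ik}\,\overline{T^j_{m\ell}}\big)\,X_k\,\overline{Y_\ell},
\]
so it suffices to show the right-hand side vanishes for all $i,j$.

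The key step is to contract (\ref{eq:P=0}) --- reading its free indices as $i,j,k,\ell$ and its summation index as $q$ --- against $X_k\overline{Y_\ell}$, summing over $k,\ell$ while holding $i,j$ fixed. This produces the five contracted terms of (\ref{eq:P=0}). Here I would invoke the characterization $X,Y\in\ker B\iff \sum_m\overline{X_m}\,T^m_{rs}=\sum_m\overline{Y_m}\,T^m_{rs}=0$: the three middle terms (those built from $T^j_{iq}$, $\overline{T^i_{jq}}$, and the mixed product $T^\ell_{iq}\overline{T^k_{jq}}$) each contain a factor of the form $\sum_m\overline{X_m}T^m_{rs}$ or $\sum_m\overline{Y_m}T^m_{rs}$, or one of their conjugates, all of which vanish since $X,Y\in\ker B$. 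Only the first and last terms survive. The first term, $\sum_q(\sum_k T^q_{ik}X_k)(\sum_\ell \overline{T^q_{j\ell}}\,\overline{Y_\ell})$, is already $(P_Y^{\ast}P_X)_{ji}$. The last term, after applying the antisymmetry $T^a_{bc}=-T^a_{cb}$ from (\ref{eq:2.1}) twice to move $X$ and $\overline{Y}$ out of the first lower slots, equals $-(P_XP_Y^{\ast})_{ji}$ (the two resulting sign changes cancel, leaving the overall minus). Thus the contracted identity reads $(P_Y^{\ast}P_X)_{ji}-(P_XP_Y^{\ast})_{ji}=0$, which is exactly the assertion.

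I expect the only real obstacle to be bookkeeping: selecting this particular contraction of (\ref{eq:P=0}) --- namely contracting precisely the last two free indices against $X$ and $\overline{Y}$ --- and then keeping the index placements straight so that the kernel hypotheses annihilate the three unwanted terms while the antisymmetry correctly reassembles the remaining two into the commutator. No analytic input is needed; the entire argument is the algebraic identity (\ref{eq:P=0}) together with the defining property of $\ker B$.
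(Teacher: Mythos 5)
Your proof is correct and follows essentially the same route as the paper: contract (\ref{eq:P=0}) against $X_k\overline{Y_\ell}$, use the $\ker B$ condition to annihilate the three middle terms, and identify the two survivors (via the antisymmetry of the lower indices of $T$) as the entries of $P_Y^{\ast}P_X$ and $-P_XP_Y^{\ast}$. The only blemish is the displayed matrix formula in your first paragraph, where the conjugated factors have their upper and first lower indices transposed --- you wrote $\overline{T^m_{i\ell}}$ and $\overline{T^j_{m\ell}}$ where $\overline{T^i_{m\ell}}$ and $\overline{T^m_{j\ell}}$ are required by your own convention $(P_Y^{\ast})_{ji}=\sum_k \overline{T^i_{jk}}\,\overline{Y_k}$ --- but since your second paragraph re-identifies the surviving contracted terms correctly and never actually uses that display, the argument is unaffected.
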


\begin{proof}
Write $X=\sum_k X_k e_k$ and $Y=\sum_{\ell} Y_{\ell} e_{\ell}$ under a local unitary frame $e$. We have $\sum_k \overline{X}_k T^k_{\ast \ast } =0$ and $\sum_{\ell} \overline{Y}_{\ell} T^{\ell}_{\ast \ast}=0$. Multiply $X_k \overline{Y}_{\ell}$ on both sides of (\ref{eq:P=0}) and sum up $k,\ell$, which implies
$$ \sum_q \{ T^q_{iX} \overline{ T^q_{jY} } - T^j_{qX} \overline{ T^i_{qY} } \} =0 $$
for any $i$, $j$. That is, $P_X P_Y^{\ast} = P_Y^{\ast} P_X $. This completes the proof of the lemma.
\end{proof}

In particular, if we let $X=Y$ in the above, we see that each $P_X$ is normal. Lemma \ref{lemma1} guarantees that we can simultaneously diagonalize $P_X$ for all $X\in \mbox{ker}B$. More precisely, let us first choose our unitary frame $e$ so that $X_{\!\eta} = \lambda e_n$. Note that $e_n$ is globally defined and $\lambda =|\eta|$ is a positive constant. Since $P_{e_n}=\frac{1}{\lambda} \phi $ is normal, we may take a unitary change of $\{ e_1, \ldots , e_{n-1}\}$ so that $P_{e_n}$ is diagonal, namely
\begin{equation}
 T^j_{in} = \delta_{ij} a_i, \label{eq:a}
 \end{equation}
where $a_1 a_2 \cdots a_r \neq 0$ and $a_{r+1}=\cdots = a_n=0$ for some $1\leq r\leq n-1$. Since $\phi$ is $\nabla^b$-parallel and $\phi_i^j=\lambda a_i\delta_{ij}$, these $a_i$'s are global constants and this $r$ here is the rank of $\phi$. It follows that $a_1 +\cdots +a_r=\lambda$. By $B=\phi+\phi^{\ast}$, we obtain that $\mbox{ker}\,\phi = \mbox{span}\{ e_{r+1}, \ldots , e_{n}\} \subseteq \mbox{ker}B$. On the other hand, we claim that $B_{i\bar{i}}>0$ for any $1\leq i\leq r$. Assume the contrary, then by the definition of the $B$ tensor we know that $T^i_{\ast \ast }=0$ for some $i$. Due to the unitary frame chosen above, we get $a_i=T^i_{in}=0$, a contradiction. So $B$ is positive definite on $\mbox{span}\{ e_{1}, \ldots , e_{r}\}$ thus $\mbox{ker}\,\phi = \mbox{ker}B$, and $r$ is also equal to the $B$-rank $r_{\! {\tiny B}}$ of $g$. In the rest of this paper, we will always use the letter $r$ to denote the $B$-rank of a {\em BKL} manifold $(M^n,g)$.

By Lemma \ref{lemma1}, for each $r+1\leq \alpha , \beta \leq n$, it yields that $P_{e_{\alpha}} P_{e_{\beta}}^{\ast} = P_{e_{\beta}}^{\ast} P_{e_{\alpha}} $, which indicates that we can choose a unitary frame $e$ so that all these $P_{e_{\alpha}}$ are also diagonal,
\begin{equation}
 T^j_{i\alpha} = \delta_{ij} b_{\alpha i} , \ \ \ r+1\leq \alpha \leq n-1, \label{eq:b}
 \end{equation}
if $r<n-1$. Note that, under this unitary frame $e$, it holds that
\begin{equation}
T^{\alpha }_{\ast \ast}=0, \ \ \ T^{\ast}_{\alpha \beta} =0,  \label{eq:alphabeta}
\end{equation}
for any $r+1\leq \alpha , \,\beta \leq n$. The first equality is established as $e_{\alpha} $ lies in the kernel of $B$, and the second equality holds when we let $i=\alpha$ and $k=\beta$ in (\ref{eq:Psum=0}). Hence, for any $r+1 \leq \alpha \leq n-1$, the possibly nonzero entries of $\{b_{\alpha i}\}_{i=1}^n$ are the first $r$ ones. We will see in Remark \ref{cst} that these $b_{\alpha i}$ can be local constants after the frame $e$ above is appropriately chosen. Since $\eta_1=\cdots =\eta_{n-1}=0$ by the choice of $e$, we have
\begin{equation}
 \sum_{i=1}^r b_{\alpha i} =0, \ \ \   r+1\leq \alpha \leq n-1. \label{eq:bsum}
 \end{equation}

\begin{lemma} \label{lemma2}
Suppose $(M^n,g)$ is a BKL manifold with $A>0$, then under the above frame and notation, the vector set $ \{ v_{r+1}, \ldots , v_{n-1},v_n\}$ is linearly independent, where $v_{\alpha} = (b_{\alpha 1}, \ldots , b_{\alpha r}) $, for $r+1 \leq \alpha \leq n-1$, and $v_n=(a_1,\ldots,a_r)$.
\end{lemma}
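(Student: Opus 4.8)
The plan is to interpret the positive‑definiteness of $A$ as the linear independence of a family of ``torsion vectors'', and then to show that a nontrivial linear relation among the $v_\alpha$ would force a forbidden relation among these vectors. For the first step I would attach to each index $i$ the array $w^{(i)}=(T^q_{ik})_{q,k}$, regarded as an element of the Hermitian space of $n\times n$ arrays with its standard inner product. Since
\[
A_{i\overline{j}}=\sum_{q,k}T^q_{ik}\,\overline{T^q_{jk}}=\langle w^{(i)},w^{(j)}\rangle ,
\]
the matrix $A$ is precisely the Gram matrix of $w^{(1)},\dots,w^{(n)}$. Consequently the hypothesis $A>0$ is \emph{equivalent} to the statement that $\{w^{(1)},\dots,w^{(n)}\}$ is linearly independent, and this is the only way in which $A>0$ will enter the argument.

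Next I would reduce the assertion to a single implication: any relation $\sum_{k=r+1}^{n}c_k v_k=0$ must force all $c_k=0$, where I write $v_k$ for $v_\alpha$ when $r+1\le k\le n-1$ and $v_n=(a_1,\dots,a_r)$. Set $X=\sum_{k=r+1}^{n}c_k e_k$, which lies in $\mbox{ker}B$. By (\ref{eq:a}) and (\ref{eq:b}) each $P_{e_k}$ with $k>r$ is diagonal, so $P_X=\sum_k c_k P_{e_k}$ is diagonal as well, with $i$‑th diagonal entry $\sum_k c_k T^i_{ik}$. For $i\le r$ this entry is exactly the $i$‑th coordinate of $\sum_k c_k v_k$, while for $i>r$ it vanishes automatically, since $T^i_{ik}=0$ whenever the upper index exceeds $r$ by (\ref{eq:alphabeta}). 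Hence the relation $\sum_k c_k v_k=0$ is equivalent to $P_X=0$.

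The key step is then to turn $P_X=0$ into a dependence among the $w^{(k)}$. The identity $P_X=0$ reads $\sum_{k>r}c_k T^j_{ik}=0$ for all $i,j$; using the antisymmetry $T^j_{ik}=-T^j_{ki}$ of the Chern torsion, this becomes $\sum_{k>r}c_k T^j_{ki}=0$ for all $i,j$, that is, $\sum_{k>r}c_k\,w^{(k)}=0$. Since $A>0$ makes $\{w^{(1)},\dots,w^{(n)}\}$ linearly independent, all $c_k=0$, which is exactly the desired linear independence of $\{v_{r+1},\dots,v_{n-1},v_n\}$.

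I expect the only genuine subtlety to be this last index manipulation: the vanishing of $P_X$ is a condition on sums over the \emph{third} torsion slot, whereas the Gram structure of $A$ reads off the \emph{second} slot, and it is precisely the skew‑symmetry of $T$ that interchanges the two and makes the two pictures coincide. Everything else is bookkeeping with the normalizations (\ref{eq:a})--(\ref{eq:alphabeta}) and the simultaneous diagonalization from Lemma \ref{lemma1}. As a consistency check, the count works out: there are $n-r$ vectors $v_k$ lying in $\mathbb{C}^r$, and Theorem \ref{thm1} already guarantees $r\ge \frac{n}{2}$, so $n-r\le r$ and independence is at least dimensionally possible, though this inequality is not needed in the argument above.
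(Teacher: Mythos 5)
Your proof is correct and is essentially the paper's own argument: the authors likewise observe that a relation $\sum_{\alpha>r}c_\alpha v_\alpha=0$ forces $\sum_{\alpha>r}c_\alpha T^j_{i\alpha}=0$ for all $i,j$, and then (via the skew-symmetry of $T$ in its lower indices) that $X=\sum_{\alpha>r}c_\alpha e_\alpha$ lies in $\ker A$, contradicting $A>0$. Your Gram-matrix reformulation of $A>0$ and the explicit handling of the slot interchange are just a more spelled-out version of the same steps.
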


\begin{proof}
Assume the contrary, namely, there exist constants $c_{r+1}, \ldots , c_{n-1},c_n$, which are not all zeros, such that $c_{r+1}v_{r+1} + \cdots + c_{n-1}v_{n-1}+c_nv_n=0$. This means that $\sum_{\alpha =r+1}^{n} c_{\alpha} T^j_{i\alpha} =0$ for any $i$ and $j$. Then $X=\sum_{\alpha =r+1}^{n} c_{\alpha } e_{\alpha}\neq 0$ lies in the kernel of $A$, which is a contradiction. This completes the proof of the lemma.
\end{proof}

 Now we are ready to to prove Theorem \ref{thm1}.

\begin{proof}[{\bf Proof of Theorem \ref{thm1}}]

We will first show that $r \geq \frac{n}{2}$ when $A>0$. If $r=n-1$, we are done since $n\geq 2$. So we may assume that $r<n-1$.  From Lemma \ref{lemma2}, we know that the set of $n-r$ vectors $v_{r+1}, \ldots , v_{n-1},v_{n}$ in ${\mathbb C}^r$ are linearly independent,
which indicates that $n-r \leq r$, that is, $r\geq \frac{n}{2}$.

Next let us assume that $(M^n,g)$ is a complete {\em BKL} manifold. If it admits a (positive dimensional) K\"ahler de Rham factor, then it is clear that $A\not > 0$. For the converse, let us assume that $A\not > 0$ and we want to show that $M$ admits at least one K\"ahler de Rham factor.

Denote by $V=\mbox{ker}A$ the zero eigenspace of $A$. Since $A$ is parallel under $\nabla^b$, $V$ is a subbundle of $T^{1,0}M$ which is parallel under $\nabla^b$. Hence, around any given point $p\in M$, we may choose our local unitary frame $e$ near $p$ so that $V$ is spanned by $\{ e_1, \ldots , e_q\}$ and $\nabla^b e_i \in V$ for $1\leq i\leq q$. It follows that $T^{\ast}_{i\ast }=0$ as $e_i \in V$, for $1\leq i\leq q$. This implies $T^i_{\ast \ast }=0$ by (\ref{eq:Psum=0}). Then it yields from \eqref{tht1} and \eqref{gm+tht2} that, for $1\leq i\leq q$, the covariant derivative with resect to the Levi-Civita connection $\nabla$ of $e_i$ is
\begin{eqnarray*}
  \nabla e_i & = & \nabla^b e_i - \gamma_{ij} e_j + (\overline{\theta}_2)_{ij} \overline{e}_j \\
  & = & \nabla^b e_i  +  \sum_{j,k=1}^n \left\{ \big( - T^j_{ik} \varphi_k + \overline{ T^i_{jk}} \, \overline{\varphi}_k \big) e_j + T^k_{ij} \overline{\varphi}_k \overline{e}_j \right\} \\
  & = & \nabla^b e_i \ \in \ V.
\end{eqnarray*}
This means that $V$ is parallel with respect to $\nabla$, which implies that the universal covering space of $M^n$ admits a de Rham decomposition and the factor corresponding to $V$ is K\"ahler as the torsion tensor vanishes there. Then it is easy to see from above that $\mbox{ker}A \subseteq \mbox{ker}\phi \cap X_{\eta}^{\bot}$, and we already know $\mbox{ker}\,\phi = \mbox{ker}B$. This completes the proof of Theorem \ref{thm1}.
\end{proof}

This motivates the following three definitions.

\begin{definition}\label{irr}
Let $(M^n,g)$ be a BKL manifold. We say that it is {\bf full} if $A>0$. Clearly, a full BKL manifold is necessarily non-K\"ahler.
\end{definition}

\begin{definition}\label{cmpt}
For a non-K\"ahler BKL manifold $(M^n,g)$, we will call a local unitary frame $e$ with $e_n =\frac{1}{|\eta|} X_{\!\eta}$, satisfying (\ref{eq:a}) and (\ref{eq:b}), a {\bf $\phi $-compatible} frame. The integer $r$, which is the rank of $B$ or $\phi$, also denoted by $r_{B}$ sometimes, is called the {\bf rank} of $(M^n,g)$.
\end{definition}

Note that $r$ is the number of non-zero terms in $\{a_i\}_{i=1}^n$.

\begin{definition}\label{bbhat}
Under a $\phi $-compatible frame $e$ on a non-K\"ahler BKL manifold, let us denote by $b=(b_{\alpha i})$ the $s \times r$ matrix and by $\hat{b} = \begin{pmatrix} b \\ v_n \end{pmatrix}$ the $(s+1) \times r$ matrix, where $b_{\alpha i}$ and $v_n=(a_1,\ldots,a_r)$ are introduced above, and  $s=n-1-r$.
\end{definition}

\begin{remark}\label{Abhat}
Lemma \ref{lemma2} and Theorem \ref{thm1} imply that, for BKL manifolds, $A>0$ if and only if $\mbox{rank}\,(\hat{b})=n-r$.
\end{remark}

We will write
$$E=\mbox{span}\{ e_1, \ldots , e_r\}, \ \ \ N=\mbox{span}\{ e_{r+1}, \ldots , e_n\}, \ \ \ N'=\mbox{span}\{ e_{r+1}, \ldots , e_{n-1}\}. $$
Clearly, $E$ and $N'$ are parallel under $\nabla^b$ and $\mbox{ker}A \subseteq N'$, since $E$ is the direct sum of $\phi$-eigenspaces corresponding to non-zero eigenvalues, while $N'$ is the orthogonal complement of the $\nabla^b$-parallel direction $e_n$ in the zero eigenspace of $\phi$. After we let $\ell =n$ or $\ell =\alpha$ with $r+1\leq \alpha \leq n-1$ in (\ref{eq:P=0}), it follows that

\begin{lemma} \label{lemma3}
Let $(M^n,g)$ be a non-K\"ahler BKL manifold. Then under any $\phi$-compatible frame $e$, it holds that
\begin{eqnarray}
 (a_i+a_k-a_j) \, \overline{T^j_{ik}} & = & 0 \label{eq:aijk} \\
 (b_{\alpha i} + b_{\alpha k} - b_{\alpha j}) \,\overline{T^j_{ik} } & = & 0 \label{eq:bijk}
\end{eqnarray}
for any $i$, $j$, $k$, and any $r+1\leq \alpha \leq n-1$.
\end{lemma}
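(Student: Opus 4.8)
The plan is to derive both identities by specializing the fundamental relation \eqref{eq:P=0} in the free index $\ell$, exploiting the diagonal structure of the Chern torsion along the directions $e_n$ and $e_\alpha$ that is built into a $\phi$-compatible frame, together with the vanishing relations \eqref{eq:alphabeta}. I would first set $\ell=n$ in \eqref{eq:P=0}, producing
\[
\sum_q \big\{ T^q_{ik}\,\overline{T^q_{jn}} + T^j_{iq}\,\overline{T^k_{nq}} + T^n_{kq}\,\overline{T^i_{jq}} - T^n_{iq}\,\overline{T^k_{jq}} - T^j_{kq}\,\overline{T^i_{nq}}\big\}=0.
\]
The evaluation of the five terms is then pure substitution. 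By \eqref{eq:a} one has $T^q_{jn}=\delta_{qj}a_j$, and combining \eqref{eq:a} with the skew-symmetry of the torsion in its two lower indices gives $T^k_{nq}=-\delta_{kq}a_k$ and $T^i_{nq}=-\delta_{iq}a_i$. Since $r+1\le n\le n$, relation \eqref{eq:alphabeta} forces $T^n_{kq}=T^n_{iq}=0$, so the two middle terms drop out entirely.

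The three surviving terms then collapse onto the single monomial $T^j_{ik}$: the first contributes $T^j_{ik}\,\overline{a_j}$, the second $-T^j_{ik}\,\overline{a_k}$, and the last, after applying $T^j_{ki}=-T^j_{ik}$, contributes $-T^j_{ik}\,\overline{a_i}$. Summing yields
\[
T^j_{ik}\,\overline{\big(a_j-a_i-a_k\big)}=0,
\]
and taking the complex conjugate delivers $(a_i+a_k-a_j)\,\overline{T^j_{ik}}=0$, which is \eqref{eq:aijk}.

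The second identity follows identically with $\ell=\alpha$ for $r+1\le\alpha\le n-1$: here I would substitute $T^q_{j\alpha}=\delta_{qj}b_{\alpha j}$ from \eqref{eq:b}, use $T^k_{\alpha q}=-\delta_{kq}b_{\alpha k}$ and $T^i_{\alpha q}=-\delta_{iq}b_{\alpha i}$, and again invoke \eqref{eq:alphabeta} in the form $T^\alpha_{kq}=T^\alpha_{iq}=0$ to kill the two middle terms. Collecting the remainder gives $T^j_{ik}\,\overline{(b_{\alpha j}-b_{\alpha i}-b_{\alpha k})}=0$, and conjugating produces \eqref{eq:bijk}. There is no genuine analytic or structural obstacle here, since everything reduces to the single algebraic identity \eqref{eq:P=0} read off in the adapted frame; the only point requiring care is the consistent bookkeeping of the skew-symmetry in the lower indices and of the conjugation taken at the very end.
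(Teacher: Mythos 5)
Your proposal is correct and is precisely the argument the paper intends: the paper simply states that the lemma follows ``after we let $\ell=n$ or $\ell=\alpha$'' in \eqref{eq:P=0}, and your computation fills in exactly those substitutions, using \eqref{eq:a}, \eqref{eq:b}, the skew-symmetry in the lower indices, and $T^{n}_{\ast\ast}=T^{\alpha}_{\ast\ast}=0$ from \eqref{eq:alphabeta} to kill the middle terms. The bookkeeping (including the final conjugation, where your expression differs from \eqref{eq:aijk} only by an irrelevant overall sign) checks out.
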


Note that the constants $a_i$ may not be all distinct. Let us denote by
$$E=E_1\oplus E_2\oplus \cdots \oplus E_p$$
the orthogonal decomposition of $E$ into $\phi$-eigenspaces with respect to distinct non-zero eigenvalues. In other words, by rearranging the order of $e_i$ if necessary, we may assume that
\begin{equation}
a_1 = \cdots = a_{n_1}, \ \ a_{n_1+1} = \cdots = a_{n_2}, \ \  \ldots , \ \ a_{n_{p-1}+1} = \cdots = a_{n_p}  \label{eq:aorder}
\end{equation}
where $p\geq 1$, $1\leq n_1 < n_2 < \cdots < n_p=r$, and $\{ a_{n_1}, a_{n_2}, \ldots , a_{n_p}\}$ are all distinct. For each $1\leq i\leq p$, the rank of $E_i$ is $m_i = n_i- n_{i-1}$, where $n_0$ is set to be $0$.

\begin{lemma}\label{lemma4}
Let $(M^n,g)$ be a non-K\"ahler BKL manifold. The endomorphism $A$ preserves the decomposition
$$ T^{1,0}M = E_1 \oplus E_2 \oplus \cdots \oplus E_p \oplus N,$$
\end{lemma}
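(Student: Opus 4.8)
The plan is to reformulate the statement in terms of matrix entries and then read everything off from the Bianchi-type identity \eqref{eq:aijk} of Lemma \ref{lemma3}. First I would record that the displayed decomposition is nothing but the eigenspace decomposition of the endomorphism $\phi$: by construction each $E_a$ is the $\phi$-eigenspace attached to the distinct nonzero eigenvalue $\lambda a_{n_a}$, while $N=\mathrm{span}\{e_{r+1},\dots,e_n\}=\mathrm{ker}\,\phi$ is the zero eigenspace (using $\mathrm{ker}\,\phi=\mathrm{ker}B$, established earlier in this section). Since $A$ is self-adjoint with respect to $g$, proving that $A$ preserves this orthogonal decomposition is equivalent to showing that all cross terms vanish, i.e. that $A_{i\overline{j}}=0$ whenever $e_i$ and $e_j$ lie in different blocks. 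As the blocks are precisely the $\phi$-eigenspaces, the whole lemma reduces to the single assertion
\[ a_i\neq a_j \ \Longrightarrow \ A_{i\overline{j}}=0. \]

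The key is that \eqref{eq:aijk} makes the Chern torsion \emph{eigenvalue-graded}. Working in the fixed $\phi$-compatible frame and writing $A_{i\overline{j}}=\sum_{u,v}T^u_{iv}\,\overline{T^u_{jv}}$ (renaming the summation indices to avoid a clash with the rank $r$), I would apply \eqref{eq:aijk} to each factor separately: the factor $T^u_{iv}$ can be nonzero only if $a_u=a_i+a_v$, and the factor $\overline{T^u_{jv}}$ can be nonzero only if $a_u=a_j+a_v$. Subtracting these two relations forces $a_i=a_j$. Consequently, if $a_i\neq a_j$ then every summand $T^u_{iv}\,\overline{T^u_{jv}}$ contains a vanishing factor, so $A_{i\overline{j}}=0$, which is exactly the cross-term vanishing required above.

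The computation is immediate, so I do not expect a serious analytic obstacle; the only place that genuinely requires care is the bookkeeping of $\phi$-eigenvalues through the upper index of the torsion, which is the point where the BKL hypothesis really enters (via \eqref{eq:aijk}). The conceptual heart of the argument is precisely this grading: a nonzero $T^u_{iv}$ forces the eigenvalue attached to the upper slot to equal the sum of those attached to the two lower slots, and once this is in hand the compatibility of $A$ with the $\phi$-eigenspace decomposition is automatic. Finally I would check the boundary cases in which one or more of $i,j,v$ lie in $N$; the argument goes through verbatim, since the convention $a_{r+1}=\cdots=a_n=0$ is already built into the relations $a_u=a_i+a_v$ and $a_u=a_j+a_v$, and in particular it shows that $N$ is preserved as a single block rather than split further.
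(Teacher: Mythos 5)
Your proposal is correct and is essentially the paper's own argument: both reduce the lemma to showing $A_{i\overline{j}}=0$ whenever $a_i\neq a_j$, and both deduce this by applying \eqref{eq:aijk} to each factor of $T^u_{iv}\,\overline{T^u_{jv}}$ to force $a_u=a_i+a_v$ and $a_u=a_j+a_v$ simultaneously. Your extra remarks on self-adjointness of $A$ and the convention $a_{r+1}=\cdots=a_n=0$ for the $N$ block are consistent with the paper and add nothing that changes the route.
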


\begin{proof}
Since $A_{i\overline{j}}=\sum_{k,\ell =1}^n T^k_{i\ell} \overline{ T^k_{j\ell }}$, it follows from (\ref{eq:aijk}) that $A_{i\overline{j}}=0$ unless $a_k=a_i+a_{\ell}$ and $a_k=a_j +a_{\ell}$ hold simultaneously. Hence, $A_{i\overline{j}}=0$ unless $a_i=a_j$. This means that the endomorphism $A$ will preserve the eigenspaces of $\phi$, hence the lemma is proved.
\end{proof}

It turns out that when $\phi$ is of maximal rank, namely, $r=n-1$, then there will be no multiplicity issue.

\begin{lemma}\label{lemma5}
Let $(M^n,g)$ be a BKL manifold with $n\geq 2$. If the $B$-rank $r=n-1$, namely, if $a_1, \ldots , a_{n-1}$ are all non-zero, then they are all distinct. In particular, $T^{1,0}M$ is the orthogonal sum of $\nabla^b$-parallel line bundles $L_i=\mbox{span}\{e_i\}$, $i=1, \ldots , n$.
\end{lemma}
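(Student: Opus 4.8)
The plan is to argue by contradiction. Since $r=n-1$, a $\phi$-compatible frame gives $T^j_{in}=\delta_{ij}a_i$ by \eqref{eq:a}, with $a_1,\dots,a_{n-1}$ all nonzero and $a_n=0$. Suppose some two of the nonzero eigenvalues coincide, say $a_i=a_j=c\neq 0$ for a pair of distinct indices $1\le i,j\le n-1$. I want to show this forces a clash with the positive semi-definiteness built into \eqref{eq:Psum=0}.

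First I would extract the vanishing of several torsion components from Lemma \ref{lemma3}. Feeding the pair into \eqref{eq:aijk} gives $(a_i+a_q-a_j)\overline{T^j_{iq}}=a_q\,\overline{T^j_{iq}}=0$ for every $q$, so $T^j_{iq}=0$ unless $a_q=0$, i.e. unless $q=n$; but $T^j_{in}=\delta_{ij}a_i=0$ since $i\neq j$. Hence $T^j_{iq}=0$ for all $q$, and symmetrically $T^i_{jq}=0$ for all $q$. An analogous use of \eqref{eq:aijk} with both lower indices equal gives $T^i_{iq}=\delta_{qn}a_i$ and $T^j_{jq}=\delta_{qn}a_j$. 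The conceptual point is that equal eigenvalues turn the ``momentum balance'' $a_j=a_i+a_q$ of Lemma \ref{lemma3} into the constraint $a_q=0$, which only the distinguished direction $e_n$ satisfies, and that direction is already pinned down by \eqref{eq:a}.

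With these vanishings in hand, I would substitute $k=j$ into \eqref{eq:Psum=0}. The two middle terms collapse to $\sum_q T^i_{iq}\overline{T^j_{jq}}+\sum_q T^j_{jq}\overline{T^i_{iq}}=a_i\overline{a_j}+a_j\overline{a_i}=2|c|^2$, while the last two terms $\sum_q|T^j_{iq}|^2$ and $\sum_q|T^i_{jq}|^2$ vanish. What remains is $\sum_q|T^q_{ij}|^2+2|c|^2=0$, a sum of nonnegative quantities with $|c|^2>0$ --- impossible. This contradiction shows that no two of $a_1,\dots,a_{n-1}$ can be equal, proving distinctness.

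Finally, the ``in particular'' clause is immediate: the $\phi$-eigenspaces are $\nabla^b$-parallel with globally constant eigenvalues, and distinctness makes each of the $n-1$ nonzero eigenspaces a line; together with the parallel kernel line $\mbox{span}\{e_n\}=\mbox{span}\{X_{\!\eta}\}$, this exhibits $T^{1,0}M$ as the orthogonal sum of the $\nabla^b$-parallel line bundles $L_i=\mbox{span}\{e_i\}$. I expect the only genuinely delicate step to be the bookkeeping in the second paragraph: one must verify that \emph{every} off-diagonal component $T^j_{iq}$ and $T^i_{jq}$ truly vanishes, since a single stray nonnegative term could spoil the sign contradiction. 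Once that is secured, the positivity clash in \eqref{eq:Psum=0} closes the argument with no further computation.
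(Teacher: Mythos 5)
Your argument is correct and is essentially identical to the paper's proof: both extract the vanishing of the off-diagonal torsion components $T^j_{i\ast}$, $T^i_{j\ast}$ and the reduction $T^i_{i\ast}=\delta_{\ast n}a_i$ from Lemma \ref{lemma3}, then plug the pair of equal eigenvalues into \eqref{eq:Psum=0} to obtain the impossible identity $\sum_q|T^q_{ij}|^2+2|a_i|^2=0$. The bookkeeping you flag as delicate is exactly the step the paper performs, and your treatment of it is complete.
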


\begin{proof}
Let $e$ be a local unitary frame that is $\phi$-compatible. If $a_i=a_k$ for some $1\leq i < k \leq n-1$, it follows from the equality (\ref{eq:aijk}) that $T^i_{kj}= T^k_{ij} = 0$ and $T^i_{ij}=T^k_{kj}=0$ for any $j\leq n-1$, as $a_j\neq 0$. Then the equality (\ref{eq:Psum=0})
implies
$$ \sum_q |T^q_{ik}|^2 + a_i \overline{a}_k + a_k \overline{a}_i = 0,$$
which is impossible since $a_i=a_k\neq 0$. This indicates that $a_1, \ldots , a_{n-1}$ are all distinct.
\end{proof}

Recall that a non-K\"ahler {\em BKL} manifold is said to have {\em degenerate torsion} if $T^{\ast}_{ik}=0$ for all $i,k<n$ where $e_n$ is parallel to $X_{\!\eta}$. By \cite{YZZ}, any  {\em BKL} manifold of dimension $n\leq 3$ always has degenerate torsion, where the $r = n-1$ condition does not imply that the manifold must be Bismut flat. In contrast, Theorem \ref{thm2} says that when $n\geq 4$,  the condition $r=n-1$ will actually force the {\em BKL} manifold to be Bismut flat and full.

\begin{proof}[{\bf Proof of Theorem \ref{thm2}}]
Under a $\phi$-compatible frame $e$, we have already seen from Lemma \ref{lemma5} that the nonzero constants  $a_1, \ldots , a_{n-1}$ are all distinct. Thus the connection matrix $\theta^b$ for $\nabla^b$ is diagonal, hence the curvature matrix $\Theta^b=d\theta^b$ is also diagonal. For $ i<n$, we will call $a_i$ an {\em isolated root,} if $T^i_{jk} = T^j_{ik}=0$ for any $j, k <n$. We will first show that there is actually no isolated $a_i$.

Now suppose that there is an isolated root $a_i$. Then the equality (\ref{eq:Psum=0}) implies $\mbox{Re} (a_i \overline{a}_k) =0$ for $k<n$ and $k\neq i$. This means that the vectors $(a_i, \overline{a}_i)$ and $(a_k, \overline{a}_k)$ are perpendicular in ${\mathbb C}^2$. The fact $n-1\geq 3$ implies that there can be at most one isolated root. Without loss of generality, we may assume that $a_{n-1}$ is the isolated root. Hence $(a_k, \overline{a}_k)$ are parallel to each other for all $1\leq k \leq n-2$, and thus we can write $a_k=t_kc$, where $\{t_k\}_{k=1}^{n-2}$ are non-zero distinct real constants and $c$ is a non-zero constant.

These real numbers $t_k$ cannot all have the same sign. If not, say they are all positive, we may assume that $0<t_1<t_2< \cdots < t_{n-2}$. Then Lemma \ref{lemma3} indicates that $T^1_{2q}=0$ for any $q$ since $a_1<a_2$, and $T^2_{1q}=0$ for any $q$ since $a_2-a_1=a_q$ is impossible for any $q>1$. Let $i=1$ and $k=2$ in (\ref{eq:Psum=0}), which yields
$$ 2t_1t_2 |c|^2 + \sum_q |T^q_{12}|^2 =0.$$
It is a contradiction as the left hand side has positive first term while the other terms are non-negative. Hence those $t_k$ must change sign.

Again without loss of generality, we may assume that those $t_k$ are in the form
$$ \cdots < t_1 < 0 < t_2 < \cdots ,$$
namely, $t_1$ is the largest negative number while $t_2$ is the smallest positive number in the set $\{ t_1, \ldots , t_{n-2}\}$. If $T^q_{12}\neq 0$ for some $q<n$, then $a_q=a_1+a_2$ by Lemma \ref{lemma3}. It is clear that $q < n-1$ here as $(a_{n-1},\overline{a}_{n-1})$ is perpendicular to $(a_{1},\overline{a}_{1})$ and $(a_{2},\overline{a}_{2})$. It follows that $t_q=t_1+t_2$. If $t_q<0$, then since $t_q=t_1+t_2>t_1$, it will contradict with the assumption that $t_1$ is the largest negative root. Similarly, if $t_q>0$, then it will also contradict with the assumption that $t_2$ is the smallest positive root. This shows that we must have $T^q_{12}=0$ for any $q$. Let $i=1$ and $k=2$ in (\ref{eq:Psum=0}), it now yields
$$ 2t_1t_2 |c|^2  - \sum_q \{ |T^1_{2q}|^2 + |T^2_{1q}|^2\} =0,$$
which is again a contradiction since the terms on the left are all non-positive, with the first one being negative. Therefore, there cannot be any isolated root amongst those $a_i$'s.

Then we will show that, for any $i<n$, $\Theta^b_{ii}=0$. This implies that the manifold will be Bismut flat. Now as $a_i$ is not an isolated root, there exists $j,k<n$ such that $T^j_{ik}$ or $T^i_{jk}$ is not zero. Without loss of generality, we assume that $T^j_{ik}\neq 0$. It is clear that $i\neq k$, as $T$ is skew-symmetric with respect to its two lower indices. It follows from Lemma \ref{lemma3} that $a_j=a_i+a_k$. In particular, $j$ can neither be equal to $i$ nor $k$ as that would imply that $a_k$ or $a_i$ is zero. Note that each $e_i$ is globally determined up to a function with unit norm, thus $|T^j_{ik}|$ is a global function. As the torsion $T$ is $\nabla^b$-parallel, $|T^j_{ik}|$ turns out to be a global constant. Let us rotate $e_i$ while fixing the others to make $T^j_{ik} = |T^j_{ik}|$. For any tangent vector $X$ , we have
$$\begin{aligned}
0 = T^j_{ik,X} &= X(T^j_{ik}) - T^j_{qk}\theta^b_{iq}(X) - T^j_{iq} \theta^b_{kq}(X) + T^q_{ik}\theta^b_{qj}(X) \\
               &= -T^j_{ik} \{ \theta^b_{ii}(X) + \theta^b_{kk}(X) - \theta^b_{jj}(X) \},
\end{aligned}$$
where we used the fact that $\theta^b$ is diagonal. This implies that $\theta^b_{ii} + \theta^b_{kk} = \theta^b_{jj}$, thus
\begin{equation}\label{eq:Thetaijk} \Theta^b_{ii} + \Theta^b_{kk}=\Theta^b_{jj}.
\end{equation}
On the other hand, since $\Theta^b$ is diagonal, the Bismut K\"ahler-like condition $\,^t\!\varphi \, \Theta^b =0$ implies that $\Theta^b_{qq} = \kappa_q \varphi_q \overline{\varphi}_q$ for each $q$, where $\kappa_q$ is a local real valued function. Note that $i$, $j$, $k$ are distinct. Hence, the equation (\ref{eq:Thetaijk}) implies that $\Theta^b_{ii}=0$.

When $r=n-1$, $\mbox{ker}B \cap X_{\eta}^{\bot} = 0$, so by Theorem \ref{thm1} we know that $\mbox{ker}A=0$, namely,  $A>0$ so $M$ is full. This completes the proof of Theorem \ref{thm2}.
\end{proof}

Note that all Bismut flat manifolds were classified in \cite{WYZ}. So for {\em BKL} manifolds with $n\geq 4$, the emphasis will be laid on the cases  $r<n-1$. Under a $\phi$-compatible frame $e$, the connection matrix $\theta^b$ is always block diagonal with respect to the decomposition
$$ T^{1,0}M = E \oplus N = E_1 \oplus \cdots \oplus E_p \oplus N $$
into the eigenspaces of $\phi$. This results from the equation
\begin{equation}
0 = T^j_{ik, X} = X(T^j_{ik}) - T^j_{qk}\theta^b_{iq}(X) -  T^j_{iq}\theta^b_{kq}(X) +  T^q_{ik}\theta^b_{qj}(X), \label{eq:Tderivative}
\end{equation}
where $X$ is any tangent vector. Let $k=n$, we get $(a_i-a_j)\theta^b_{ij} = 0$, therefore $\theta^b_{ij}=0$ whenever $a_i\neq a_j$. When $r<n-1$, those non-zero $a_j$ may no longer be all distinct. Nonetheless the connection matrix $\theta^b$ is still diagonal over the $E$ part.

\begin{lemma}\label{lemma6}
Let $(M^n,g)$ be a non-K\"ahler BKL manifold. If the $B$-rank $r<n-1$, then for any given $\phi$-compatible frame $e$, the connection matrix $\theta^b$ is block diagonal with respect to the eigenspace decomposition of $\phi$, and $\theta^b$ is diagonal over the $E$ part, namely, $\theta^b_{ij}=0$ for any $1\leq i<j\leq r$.
\end{lemma}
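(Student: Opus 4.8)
The plan is to extract everything from the single fact that the Chern torsion is $\nabla^b$-parallel, i.e.\ that (\ref{eq:Tderivative}) holds for every tangent vector $X$, by specializing the free index $k$ to the values $n$ and $\alpha$ (with $r+1\le\alpha\le n-1$), for which (\ref{eq:a}) and (\ref{eq:b}) make all torsion components explicit. Taking $k=n$ and using that $e_n=X_{\eta}/|\eta|$ is $\nabla^b$-parallel (so $\theta^b_{nq}=0$) together with $T^j_{in}=\delta_{ij}a_i$, equation (\ref{eq:Tderivative}) collapses to $(a_i-a_j)\theta^b_{ij}=0$. This is exactly the lead-in computation, and it already gives block-diagonality with respect to the eigenspace decomposition of $\phi$, as well as $\theta^b_{ij}=0$ whenever $a_i\ne a_j$. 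Hence the entire remaining content is the case $i\ne j$ with $a_i=a_j\ne0$, that is, two indices lying in one common $\phi$-eigenspace $E_s$.

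For such a pair I would first record the key vanishing $T^j_{ik}=T^i_{jk}=0$ for \emph{every} $k$: by (\ref{eq:aijk}) one has $a_k\,\overline{T^j_{ik}}=0$, so $T^j_{ik}$ can survive only for $k\in\ker\phi$, and there (\ref{eq:a}) and (\ref{eq:b}) give $T^j_{in}=\delta_{ij}a_i=0$ and $T^j_{i\alpha}=\delta_{ij}b_{\alpha i}=0$ since $i\ne j$. Now set $k=\alpha$ in (\ref{eq:Tderivative}); the term $\sum_q T^j_{iq}\theta^b_{\alpha q}$ drops out by this vanishing, and what remains is precisely $(b_{\alpha i}-b_{\alpha j})\theta^b_{ij}=0$ for each $r+1\le\alpha\le n-1$. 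Together with the $k=n$ relation this forces $\theta^b_{ij}=0$ as soon as the two columns $(a_i,b_{r+1,i},\dots,b_{n-1,i})$ and $(a_j,b_{r+1,j},\dots,b_{n-1,j})$ of $\hat b$ differ, which settles every pair of indices separated by some $\phi$- or $P_{e_\alpha}$-eigenvalue.

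The step I expect to be the real obstacle is the residual degenerate case in which these two columns coincide, so that $i$ and $j$ lie in a common joint eigenspace $W$ of $\phi$ and of all the normal endomorphisms $P_{e_\alpha}$. There the torsion is completely blind to the internal geometry of $W$: the vanishing above shows that $T^j_{ik}=0$ for all $k$ whenever $i\ne j\in W$, so no specialization of (\ref{eq:Tderivative}) produces a nonzero coefficient in front of $\theta^b_{ij}$, and torsion-parallelism degenerates to $0=0$. Since a $\phi$-compatible frame may be rotated arbitrarily inside $W$, the assertion $\theta^b_{ij}=0$ for \emph{all} such frames is equivalent to the induced Bismut connection on $W$ being scalar, and this must be proved from curvature rather than from torsion. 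My plan to finish is to observe that the previous steps already make $W$ a $\nabla^b$-parallel sub-bundle and, via $\gamma_{ij}=0$ (again from $T^j_{ik}=0$ and (\ref{gm+tht2})), that on $W$ the Bismut connection agrees off-diagonally with the Chern connection, while the mixed torsion $T^b(e_i,\overline{e}_j)$ also vanishes for $i\ne j\in W$; hence $\Theta^b|_W$ is of pure type $(1,1)$ and K\"ahler-like, and through the first Bianchi identity for $\nabla^b$, which because $\nabla^bT=0$ expresses the curvature by quadratic torsion terms that are themselves blind to $W$, its trace-free part should be forced to vanish, making $\theta^b|_W$ central and thus $\theta^b_{ij}=0$ for all $i\ne j\in W$. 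Turning this curvature reduction into a rigorous argument, i.e.\ genuinely ruling out a non-abelian higher-rank block $W$, is where the essential difficulty lies.
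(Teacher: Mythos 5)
Your first two steps coincide with the paper's: $k=n$ in (\ref{eq:Tderivative}) gives $(a_i-a_j)\theta^b_{ij}=0$ and hence block-diagonality, and $k=\alpha$ gives $(b_{\alpha i}-b_{\alpha j})\theta^b_{ij}=0$ after the vanishing $T^j_{i\beta}=\delta_{ij}b_{\beta i}=0$ kills the $\theta^b_{\alpha\beta}$ term. So you have correctly reduced the lemma to the case where the two columns $(a_i,b_{r+1,i},\dots,b_{n-1,i})$ and $(a_j,b_{r+1,j},\dots,b_{n-1,j})$ of $\hat b$ coincide. The genuine gap is what you do with that residual case: you treat it as a real geometric possibility (a higher-rank joint eigenblock $W$) and propose a curvature argument to show $\theta^b|_W$ is scalar, which you yourself admit you cannot complete. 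The missing idea is that this case is \emph{vacuous}, and the tool that kills it is the quadratic torsion identity (\ref{eq:Psum=0}), not curvature. Indeed, if $a_i=a_j\neq 0$ with $i\neq j\le r$, then (\ref{eq:aijk}) gives $T^i_{jq}=T^j_{iq}=0$ for all $q\le r$ (since $a_q\neq0$), and the remaining components vanish by (\ref{eq:a}), (\ref{eq:b}); substituting into (\ref{eq:Psum=0}) with $k=j$ yields
\begin{equation*}
\sum_q |T^q_{ij}|^2 \;+\; 2\,\mathrm{Re}\Big(a_i\overline{a_j}+\sum_{\alpha=r+1}^{n-1} b_{\alpha i}\overline{b_{\alpha j}}\Big)=0 .
\end{equation*}
If moreover $b_{\alpha i}=b_{\alpha j}$ for every $\alpha$, the left-hand side equals $\sum_q|T^q_{ij}|^2+2|a_i|^2+2\sum_\alpha|b_{\alpha i}|^2>0$, a contradiction. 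Hence some $\alpha$ satisfies $b_{\alpha i}\neq b_{\alpha j}$, and your own $k=\alpha$ relation finishes the proof. (This is the same trick the paper already used to prove Lemma \ref{lemma5} and Theorem \ref{thm2}; note it genuinely needs $r<n-1$ so that indices $\alpha$ exist.)

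Beyond being unnecessary, your fallback plan is unlikely to close the gap on its own terms: if a block $W$ with identical columns existed, the frame could be rotated arbitrarily inside $W$, and nothing in the K\"ahler-like symmetries or the Bianchi identity forces the restriction of $\theta^b$ (a connection form, not a tensor) to be scalar there; you would at best constrain $\Theta^b|_W$, not $\theta^b_{ij}$. So the correct resolution is to rule the configuration out via the torsion identity rather than to tame it via curvature.
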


\begin{proof}
It suffices to show the diagonal part when $r<n-1$. We will first show that, if $a_i=a_k$ for some $1\leq i<k\leq r$, there exists $\alpha $ with $r+1\leq \alpha \leq n-1$ such that $b_{\alpha i} \neq b_{\alpha k}$.

The strategy of the proof of Theorem \ref{thm2} could  also be applied here. It follows from (\ref{eq:aijk}) in Lemma \ref{lemma3} that $T^i_{kj}=T^k_{ij}=0$ for any $j\leq r$, since $a_i=a_k$ and $a_j\neq 0$. Then the equality (\ref{eq:Psum=0}) implies that
$$ \sum_q |T^q_{ik}|^2 + 2\mbox{Re} \{ \,a_i \overline{a}_k + \!\sum_{\alpha =r+1}^{n-1} \!b_{\alpha i} \overline{b_{\alpha k}} \,\}  = 0.$$
Hence, if $b_{\alpha i}=b_{\alpha k} $ for all $\alpha$, the above is clearly a contradiction, which indicates that $b_{\alpha i} \neq b_{\alpha k}$ for some $\alpha$.

Then let us take $k=\alpha$ and $i,j\leq r$ in (\ref{eq:Tderivative}), which implies
$$ 0 = X(b_{\alpha i} \delta_{ij}) + (b_{\alpha i} - b_{\alpha j}) \theta^b_{ij}(X)
- \sum_{\beta=r+1}^{n-1} T^j_{i\beta} \theta^b_{\alpha \beta }(X), $$
where in the last term above we used the fact that $\theta^b_{\alpha n} =0$, and $\theta^b_{\alpha q} =0$ for any $q\leq r$, as $a_q\neq a_{\alpha}$. For $i\neq j$, the above equality gives us $(b_{\alpha i} - b_{\alpha j}) \theta^b_{ij} =0$, thus $\theta^b_{ij}=0$ holds. This completes the proof of the lemma.
\end{proof}

For the $N$ part, we can actually always choose $\phi$-compatible frame $e$ appropriately when $A>0$, so that the connection matrix $\theta^b$ is zero over the $N$ part, hence is diagonal in the case $r<n-1$.

Let $(M^n,g)$ be a {\em BKL} manifold with $A>0$ and $e$ be a local $\phi$-compatible frame. Write $r=r_{\!{\tiny B}}$ and $s=n-1-r$. Note that the $s\times r$ matrix $b=(b_{\alpha i})$ is introduced in Definition \ref{bbhat}. Let us denote by $\theta^b_1=(\theta^b_{\alpha \beta})$ the $s\times s$ matrix. Denote by $P$ the left $s\times s$ corner of $b$. It follows from Lemma \ref{lemma2} that $\mbox{rank}\, (b) =s$. Then, by rearranging the order of $\{ e_1, \ldots , e_r\}$ if necessary, we may assume that $P$ is non-singular. Since $\theta^b$ is block diagonal for $E\oplus N$ and diagonal over $E$, we have  for any  $1\leq i \leq r$, $r+1 \leq \alpha \leq n-1$ that
$$ 0 = T^i_{i\alpha , X} = X(T^i_{i\alpha}) - \sum_q \{ T^i_{q\alpha} \theta^b_{iq}(X)+T^i_{iq} \theta^b_{\alpha q}(X) - T^q_{i\alpha} \theta^b_{qi}(X) \} .$$
Hence $d T^i_{i\alpha} = \sum_{\beta=r+1}^{n-1} \theta^b_{\alpha \beta} T^i_{i\beta}$, or $db = \theta^b_1 b$. This implies that $ dP = \theta^b_1 P$, therefore we have $\theta^b_1 = dP P^{-1}$ and
$$ d\theta^b_1 - \theta^b_1 \theta^b_1 = d(dP P^{-1}) - dP P^{-1}dP P^{-1} = 0.$$
That is, $\Theta^b_{\alpha \beta}=0$ for any $r+1 \leq  \alpha , \beta \leq n-1$. Therefore, the Bismut curvature is flat over $N$. This means that we can always choose a $\phi$-compatible frame $e$ so that $\nabla^b e_{\alpha}=0$ for each $r+1 \leq \alpha \leq n-1$.

\begin{definition}
For a BKL manifold $(M^n,g)$, a $\phi$-compatible local frame $e$ is called {\bf strictly $\phi$-compatible,} if $\nabla^b e_{\alpha}=0$ for each $r+1 \leq \alpha \leq n-1$, where $r$ is the $B$-rank.
\end{definition}

\begin{remark}\label{cst}
Under a strictly $\phi$-compatible frame $e$, each $b_{\alpha i}$ is now a local constant, and each $e_{\alpha}$ is a local holomorphic vector field, as $T^{\alpha}_{\ast \ast}=0$ implies $\nabla^c_{\overline{e}_q} e_{\alpha} = \nabla^b_{\overline{e}_q} e_{\alpha}
- 2 \gamma_{\overline{e}_q} e_{\alpha} =0$. Also, each $e_{\alpha}$ can be globally defined if $M^n$ is simply-connected. At this time, for each $\alpha$, $b_{\alpha 1},\cdots,b_{\alpha r}$ turn out to be global constants, as they are actually the eigenvalues of the endomorphism $T^c(\cdot \,,e_{\alpha})$ of the globally defined distribution $E$, where $E=\mbox{span}\{e_1,\cdots,e_r\}$. While $e_n$ is actually always a globally defined holomorphic $\nabla^b$-parallel vector field as $X_{\eta}=\lambda e_n$.
\end{remark}

Summarize what we discussed above, we have obtained the following

\begin{lemma} \label{s-cmpt}
Let $(M^n,g)$ be a BKL manifold with $A>0$. Then given any $p\in M$, there exists a local unitary frame $e$ in a neighborhood of $p$ such that $e$ is strictly $\phi$-compatible.
\end{lemma}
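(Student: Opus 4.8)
The plan is to start from any local $\phi$-compatible frame $e$ near $p$ — which exists by the diagonalization construction preceding Definition \ref{cmpt} (take $e_n=\frac{1}{|\eta|}X_{\!\eta}$, diagonalize the normal endomorphism $\phi$ to get \eqref{eq:a}, then simultaneously diagonalize the commuting family $\{P_{e_\alpha}\}$ via Lemma \ref{lemma1} to get \eqref{eq:b}) — and then to adjust only the middle block $\{e_{r+1},\dots,e_{n-1}\}$ spanning $N'$, keeping $e_1,\dots,e_r$ and $e_n$ fixed, so that the adjusted frame becomes $\nabla^b$-parallel on $N'$ while remaining unitary and $\phi$-compatible. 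The first observation to record is that a \emph{unitary} recombination $\tilde e_\alpha=\sum_\beta U_{\alpha\beta}e_\beta$ within $N'$ automatically preserves $\phi$-compatibility: by \eqref{eq:b} it only replaces $b_{\alpha i}$ by $\sum_\beta U_{\alpha\beta}b_{\beta i}$, it does not touch $e_n=\frac{1}{|\eta|}X_{\!\eta}$ or the $E$-part, and hence \eqref{eq:a} and \eqref{eq:b} remain in force. So the whole problem reduces to trivializing the $N'$-block of $\theta^b$ by a unitary gauge.

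The key step is to show that the $N'$-block $\theta^b_1=(\theta^b_{\alpha\beta})_{r+1\le\alpha,\beta\le n-1}$ is flat. Here I would use three facts already in hand: $\theta^b$ is block diagonal for $E\oplus N$ and diagonal over $E$ (Lemma \ref{lemma6}); $e_n$ is $\nabla^b$-parallel so its row and column in $\theta^b$ vanish; and the torsion is $\nabla^b$-parallel. Feeding $T^i_{i\alpha}=b_{\alpha i}$ into the parallelness identity \eqref{eq:Tderivative} and discarding the terms killed by block-diagonality and by $\theta^b_{\alpha n}=0$, the two $\theta^b_{ii}$-contributions cancel and one is left with $db=\theta^b_1\,b$ for the $s\times r$ matrix $b=(b_{\alpha i})$, where $s=n-1-r$. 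By Lemma \ref{lemma2} the rows $v_{r+1},\dots,v_{n-1}$ are linearly independent, so $\operatorname{rank} b=s$; after reordering $e_1,\dots,e_r$ we may take the left $s\times s$ block $P$ of $b$ invertible, whence $dP=\theta^b_1 P$ and $\theta^b_1=dP\,P^{-1}$. Recognizing this as a pure Maurer--Cartan form immediately gives $\Theta^b_1=d\theta^b_1-\theta^b_1\wedge\theta^b_1=0$, i.e. $\nabla^b$ is flat on the $\nabla^b$-parallel subbundle $N'$.

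To finish, I would pass from flatness to an actual parallel unitary frame. Since $\theta^b_1$ is skew-Hermitian (the Bismut connection is metric) and flat, parallel transport in $N'$ over a simply-connected neighborhood of $p$ is path-independent and isometric; transporting a unitary basis of the fiber $N'_p$ therefore produces a unitary $\nabla^b$-parallel frame $\{\tilde e_{r+1},\dots,\tilde e_{n-1}\}$ of $N'$. Because this is a unitary recombination of the old $N'$-frame, by the first paragraph it keeps $\phi$-compatibility; together with the untouched $e_1,\dots,e_r,e_n$ it is the desired strictly $\phi$-compatible frame.

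The main obstacle I anticipate is reconciling three demands at once in the last step — parallelism on $N'$, unitarity, and $\phi$-compatibility. The explicit gauge $Q=P^{-1}$ built from $\theta^b_1=dP\,P^{-1}$ does make the $N'$-frame parallel but is not unitary in general, so one cannot simply use it verbatim. The clean fix is that, $\nabla^b$ being metric, the Gram matrix of any $\nabla^b$-parallel $N'$-frame is constant; composing with a single constant matrix that orthonormalizes it yields a parallel \emph{unitary} frame, and a constant linear change within $N'$ preserves \eqref{eq:a}--\eqref{eq:b}. Equivalently, one invokes the standard fact that a flat metric connection is locally trivialized by a unitary gauge. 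Everything else — the existence of a $\phi$-compatible starting frame and the identity $\theta^b_1=dP\,P^{-1}$ — is bookkeeping resting on Lemmas \ref{lemma1}, \ref{lemma2}, \ref{lemma6} and the $\nabla^b$-parallelness of $T$ and $X_{\!\eta}$.
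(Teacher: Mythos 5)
Your proof is correct and follows essentially the same route as the paper: derive $db=\theta^b_1\,b$ from the $\nabla^b$-parallelness of the torsion together with Lemma \ref{lemma6}, use Lemma \ref{lemma2} to extract an invertible block $P$ so that $\theta^b_1=dP\,P^{-1}$ is flat, and then trivialize the flat metric connection on $N'$ to obtain the strictly $\phi$-compatible frame. The only difference is that you spell out the final step --- that a flat \emph{metric} connection admits a local parallel unitary frame and that a unitary recombination within $N'$ preserves conditions (\ref{eq:a}) and (\ref{eq:b}) --- which the paper asserts without elaboration.
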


Now we are ready to prove Theorem \ref{thm3} stated in the introduction.

\begin{proof}[{\bf Proof of Theorem \ref{thm3}}]
Let $(M^n,g)$ be a {\em BKL} manifold without any K\"ahler de Rham factor of dimension bigger than one. Assume that it is Calabi-Yau with torsion ({\em CYT}), namely assume that $\mbox{tr} (\Theta^b )=0$. We want to conclude that $\Theta^b=0$. Note that for any one dimensional K\"ahler de Rham factor, the {\em CYT} assumption forces it to be flat, so we may ignore them and assume that $M$ admits no K\"ahler de Rham factor, or equivalently, $A>0$ ($M$ is full). By Lemma \ref{s-cmpt}, locally we always have strictly $\phi$-compatible frame $e$, under which the connection matrix $\theta^b$ is diagonal over the $E=\mbox{span}\{ e_1, \ldots , e_r\}$ block and zero elsewhere, and thus $\Theta^b$ is also diagonal. It follows from the {\em BKL} condition that $\,^t\!\varphi \, \Theta^b =0$, which implies that $\Theta^b_{ii} = \kappa_i \varphi_i \overline{\varphi}_i$ for each $1\leq i\leq r$, while $\Theta^b_{\alpha \alpha} =0$ for any $r<\alpha \leq n$. Therefore, it holds that
$$ 0= \mbox{tr} (\Theta^b) = \sum_{i=1}^r \kappa_i \varphi_i \overline{\varphi}_i, $$
which implies that each $\kappa_i=0$, hence $\Theta^b=0$. This completes the proof of Theorem \ref{thm3}.
\end{proof}

\vs

\section{Proof of Theorem \ref{thm4}.}\label{BKL4D}
We are ready to prove Theorem \ref{thm4} in this section. For the first part of this theorem, let $(M^n,g)$ be a complete full {\em BKL} manifold and $r=r_{\!{\tiny B}}=\frac{n}{2}$, which implies $r=s+1$ where $s=n-1-r$, so the matrix $\hat{b}$ is an invertible $r \times r$ matrix by Lemma \ref{lemma2}. For convenience, we may set $b_{ni} = a_i$ so that the entries of $\hat{b}$ can be unified as $(b_{\alpha i})$, where $r+1 \leq \alpha \leq n, 1 \leq i \leq r$. Let us choose a strictly $\phi$-compatible frame $e$, and assume that $a_1a_2\cdots a_r\neq 0$ and $a_{r+1}=\cdots=a_n=0$. It follows from Remark \ref{cst} that $e_{r+1},\cdots,e_{n}$ are globally defined, holomorphic, $\nabla^b$-parallel vector fields on the universal covering $\widetilde{M}$ of $M^n$, and $b_{\alpha i}$ are global constants of $\widetilde{M}$.

It follows that $T_{ik}^j=0$ for $1 \leq i,j,k \leq r$ in this case. If not, that is, $T_{ik}^j \neq 0$ for some $i,j,k \leq r$, Lemma \ref{lemma3} indicates that $b_{\alpha j} = b_{\alpha i} + b_{\alpha k}$ for any $r+1 \leq \alpha \leq n$. So the sum of the $i$-th and $k$-th columns equal to the $j$-th column, contradicting with the fact that $\hat{b}$ is an invertible matrix. Now let us compute the covariant derivatives with respect to the Levi-Civita connection $\nabla$ on $\widetilde{M}$.  For $1 \leq i \leq r$ and $r+1 \leq \alpha \leq n$, we have
\begin{eqnarray*}
\nabla e_i & = & \nabla^b e_i - \gamma (e_i) + \overline{\theta_2} (e_i) \\
& = & \theta^b_{ii} e_i + \sum_{q,k} \{  \overline{T^i_{qk}} \overline{\varphi}_k - T^q_{ik} \varphi_k \} e_q + \sum_{q,k}  T^k_{iq} \overline{\varphi}_k \overline{e}_q\\
& = & \left( \theta_{ii}^b + \sum_{\alpha=r+1}^n \overline{b}_{\alpha i} \overline{\varphi}_{\alpha} - b_{\alpha i} \varphi_{\alpha} \right)e_i
+ \overline{\varphi}_i \left( \sum_{\alpha=r+1}^n b_{\alpha i} \overline{e}_{\alpha} - \overline{b}_{\alpha i} e_{\alpha}\right),\\
\end{eqnarray*}
and similarly,
\begin{eqnarray*}
\nabla e_{\alpha} & = &  \nabla^b e_{\alpha} - \gamma (e_{\alpha}) + \overline{\theta_2} (e_{\alpha}) \\
& = & -\sum_{q,k} T_{\alpha k}^q \varphi_k e_q + \sum_{q,k} T_{\alpha q}^k \overline{\varphi}_k \overline{e}_q \\
& = & \sum_{k=1}^r b_{\alpha k}(\varphi_k e_k - \overline{\varphi}_k \overline{e}_k).
\end{eqnarray*}
For $1 \leq i \leq r$, let us write
\[X_i = - \sum_{r+1 \leq \alpha \leq n} \overline{b}_{\alpha i} e_{\alpha}, \]
then we have
\[\nabla X_i = - \sum_{\begin{subarray}{c} 1 \leq k \leq r \\ r+1 \leq \alpha \leq n \end{subarray}}
\overline{b}_{\alpha i} b_{\alpha k}(\varphi_k e_k - \overline{\varphi}_k \overline{e}_k).\]
As the matrix $\hat{b}$ is invertible, the globally defined vector fields $X_1,X_2,\cdots,X_r$ are also a basis of $N=\mbox{span}\{ e_{r+1},\cdots,e_n \}$. From $d\varphi = - ^t\!\theta^c \varphi + \tau = - ^t\!\theta^b \varphi + 2 ^t\!\gamma \varphi + \tau$, it yields that, for $1 \leq i \leq r$, $r+1 \leq \alpha \leq n$,
\begin{align}
d \varphi_i &= - \theta^b_{ii} \varphi_i + 2 \sum_k \gamma_{k i} \varphi_k + \tau^i  \notag\\
&=  - \theta^b_{ii} \varphi_i + 2 \sum_{\alpha =r+1}^n \varphi_i (\overline{b}_{\alpha i} \overline{\varphi}_\alpha - b_{\alpha i} \varphi_\alpha),\label{dvarphi} \\
d \varphi_\alpha &= 2 \sum_k \gamma_{k \alpha} \varphi_k + \tau^\alpha \notag \\
&= - 2 \sum_{k=1}^r \overline{b}_{\alpha k} \varphi_k \overline{\varphi}_k,\label{dvarphii}
\end{align}
which implies, for each $1\leq k \leq r$, $\varphi_k \overline{\varphi}_k$ is globally defined as $d \varphi_\alpha$ is, and thus $e_k$ is globally determined up to a function of unit norm. This indicates that, for $1 \leq i \leq r$, the distribution $F_i = \mbox{span}\{e_i,X_i\}$ is globally defined on $\widetilde{M}$. Since $X_i$ is holomorphic and
$$ \nabla^c_{\overline{Z}} e_i = \nabla^b_{\overline{Z}} e_i - 2\gamma_{\overline{Z}} e_i = (\theta^b_{ii}(\overline{Z}) + 2\overline{T^i_{iZ}}) e_i + 2 \overline{ \varphi_i(Z)}X_i,$$
we see that $ \nabla^c_{\overline{Z}} F_i \subseteq F_i$. Hence, $F_i$ is a holomorphic distribution, and we get a global holomorphic decomposition $T^{1,0}\widetilde{M} =  \oplus_{i=1}^r F_i$. It follows from the $\nabla$-covariant derivative formulae for $e_i$ and $X_i$ above that
$$ \nabla e_i \equiv 0, \qquad \nabla X_i \equiv - \sum_{\begin{subarray}{c} 1 \leq k \leq r, k\neq i \\ r+1 \leq \alpha \leq n \end{subarray}}
\overline{b}_{\alpha i} b_{\alpha k}(\varphi_k e_k - \overline{\varphi}_k \overline{e}_k) \qquad \mbox{modulo} \ F_i \oplus \overline{F}_i.$$
Hence $F_i \oplus \overline{F}_i$ is closed under the Lie bracket and thus a foliation. Therefore, $\widetilde{M}$ decomposes into a product of $r$ complex surfaces $N_1,\cdots,N_r$, where $N_i$ is generated by $F_i \oplus \overline{F}_i$, and the product is in general not a metric product.

Note that the K\"ahler form of the metric $g$ is
\[\omega_g = \sqrt{-1}\sum_{i=1}^r( \varphi_i \overline{\varphi}_i + \varphi_{r+i} \overline{\varphi}_{r+i})\]
and $\varphi_i \overline{\varphi}_i$ is globally defined for each $1 \leq i \leq r$, while, $\sqrt{-1}( \varphi_i \overline{\varphi}_i + \varphi_{r+i} \overline{\varphi}_{r+i})$ might not define a Hermitian metric on each $N_i$. So we let $\{\psi_{r+i}\}_{i=1}^r$ be the dual basis of $\{X_i\}_{i=1}^r$ and it is clear that, for $1 \leq j \leq r$,
\[\psi_{r+j} = - \sum_{r+1 \leq \beta \leq n} \overline{c}_{j \beta} \varphi_\beta,\]
where $(c_{j\beta})$ is the inverse matrix of $\hat{b}=(b_{\alpha i})$. Then the globally defined form $\omega_i = \sqrt{-1}(\varphi_i \overline{\varphi}_i + \psi_{r+i} \overline{\psi}_{r+i})$ defines a natural Hermitian metric on $N_i$. It follows that
\[\omega_h = \sqrt{-1} \sum_{i=1}^r(\varphi_i \overline{\varphi}_i + \psi_{r+i} \overline{\psi}_{r+i}) \]
defines the product metric on $\widetilde{M} = N_1 \times \cdots \times N_r$. We will see that $(N_i,\omega_i)$ is actually a non-K\"ahler {\em BKL} surface for each $i$, and thus $(\widetilde{M}, h)$ is also {\em BKL}.

Let $1 \leq i<k \leq r$ in (\ref{eq:Psum=0}) and we get
\begin{equation}\label{Re=0}
\sum_{r+1 \leq \alpha \leq n}\mbox{Re} (b_{\alpha i}\overline{b}_{\alpha k}) = 0.
\end{equation}
It follows from the equalities \eqref{dvarphi}, \eqref{dvarphii} and \eqref{Re=0} that, for $1 \leq i \leq r$,
\[ d \psi_{r+i} = 2 \varphi_i \overline{\varphi}_i ,\]
\[d \varphi_i = -\big( \theta_{ii}^b + \sum_{r+1 \leq \alpha \leq n} 2|b_{\alpha i}|^2 (\psi_{r+i} - \overline{\psi}_{r+i})
+ \sum_{\begin{subarray}{c} 1 \leq j \leq r, j\neq i \\ r+1 \leq \alpha \leq n \end{subarray}}
2b_{\alpha i } \overline{b}_{\alpha j}(\psi_{r+j} + \overline{\psi}_{r+j}) \big) \varphi_i. \]
Denote by
\[\xi_i =  \theta_{ii}^b + \sum_{r+1 \leq \alpha \leq n} 2|b_{\alpha i}|^2 (\psi_{r+i} - \overline{\psi}_{r+i})
+ \sum_{\begin{subarray}{c} 1 \leq j \leq r, j\neq i \\ r+1 \leq \alpha \leq n \end{subarray}}
2b_{\alpha i } \overline{b}_{\alpha j}(\psi_{r+j} + \overline{\psi}_{r+j}),\]
and we get, under the unitary frame $(e_i,X_i)$ on $(N_i,\omega_i)$,
$$ d \!\begin{bmatrix} \varphi_i \\ \psi_{r+i} \end{bmatrix} \! =
\! \begin{bmatrix} -\xi_i & 2\varphi_i \\ -2\overline{\varphi}_i & 0\\ \end{bmatrix}\!
\begin{bmatrix} \varphi_i \\ \psi_{r+i} \end{bmatrix} +
\begin{bmatrix} -2\varphi_i \psi_{r+i} \\  0 \end{bmatrix}\!. $$
It is clear that the square matrix is skew-Hermitian, and the last column is of type $(2,0)$, which yields the Chern connection matrix $\tilde{\theta}^c$ and the Chern torsion $\tilde{\tau}$ under $(e_i,X_i)$
\[\tilde{\theta}^c = \! \begin{bmatrix} \xi_i & 2\overline{\varphi}_i \\ -2 \varphi_i & 0 \end{bmatrix}\!, \quad
\tilde{\tau} = \! \begin{bmatrix} -2\varphi_i \psi_{r+i} \\ 0 \end{bmatrix}\!.\]
It indicates that the torsion components are
$$ \widetilde{T}^1_{12} = -1,\quad \widetilde{T}^2_{12} = 0. $$
Then the tensor $\gamma$ amounts to
$$ \tilde{\gamma} = \!\begin{bmatrix} -\psi_{r+i} + \overline{\psi}_{r+i} & - \overline{\varphi}_i \\ \varphi_i & 0 \end{bmatrix}\!,$$
and the Bismut connection matrix $\tilde{\theta}^b$ becomes
$$ \tilde{\theta}^b = \tilde{\theta} + 2 \tilde{\gamma} =
\!\begin{bmatrix} \xi_i -2\psi_{r+i} + 2\overline{\psi}_{r+i} & 0  \\ 0 & 0 \end{bmatrix}\!.$$
It follows from the proof of Theorem \ref{thm3} that
$$ \widetilde{\Theta}^b =\! \begin{bmatrix} \Theta_{ii}^b + 8(\sum\limits_{\alpha = r+1}^n |b_{\alpha i}|^2 -1) \varphi_i \overline{\varphi}_i & 0 \\ 0 & 0 \end{bmatrix} \!
= \!\begin{bmatrix} \big( \kappa_i + 8(\sum\limits_{\alpha = r+1}^n |b_{\alpha i}|^2 -1) \big) \varphi_i \overline{\varphi}_i & 0 \\ 0 & 0 \end{bmatrix}\!.$$
Therefore, it is now clear that $\widetilde{\Theta}^b$ satisfies $ \, (\varphi_i, \psi_{r+i}) \widetilde{\Theta}^b =0$, which means that the complex surface $(N_i, \omega_i)$ is {\em BKL}.

It is not hard to see that, amongst all the constant linear changes of $\{ e_{r+1}, \cdots, e_{n}\}$ into the new vectors $\{ X_1,\cdots, X_r\}$, the one we did above is exactly those which makes $\omega_h$ remain to be pluriclosed. For this reason, we will call $h$ a {\em pluriclosed modification} of $g$, or $g$ a {\em pluriclosed product} of the {\em BKL} surfaces $N_1,\cdots,N_r$, twisted by the matrix $(-\overline{b}_{\alpha i})$. This motivates the following definition.

\begin{definition}\label{p-twisted}
Given $r$ non-K\"ahler {\em BKL} surfaces $(N_1,h_1),\cdots, (N_r,h_r)$ and an invertible $r \times r$ matrix $D=(d_{ij})$, satisfying $\sum_i d_{ij} \overline{d_{ik}} + d_{ik} \overline{d_{ij}} =0$ for $1 \leq j < k \leq r$, we can construct a Hermitian metric $g$ on the product manifold $\Pi_{i=1}^rN_i = N_1 \times \cdots \times N_r$ that is {\em BKL}.

Let $(e_1^i,e_2^i)$ be a $\phi$-compatible frame on $(N_i,h_i)$ with the dual frame denoted by $(\varphi_1^i, \varphi_2^i)$ for $1 \leq i \leq r$, where $e_2^i$, $\varphi_2^i$ and $\varphi_1^i \overline{\varphi_1^i}$ are clearly globally defined. Twist $\varphi_2^i$ by the matrix $D$ as
\[\psi_2^i = \sum_j d_{ij} \varphi_2^j.\]
Define
\[\omega_g = \sqrt{-1} \sum_i (\varphi_1^i \overline{\varphi_1^i} + \psi_2^i \overline{\psi_2^i}),\]
which induces a globally defined Hermitian metric $g$ on $\Pi_{i=1}^rN_i$, while, the product metric $h$ on $\Pi_{i=1}^rN_i$ is given by
\[ \omega_h = \sqrt{-1} \sum_i(\varphi_1^i \overline{\varphi_1^i} + \varphi_2^i \overline{\varphi_2^i}). \]
We will say the Hermitian metric $g$ is a {\em pluriclosed modification} of $h$, or a {\em pluriclosed twisted product} of the {\em BKL} surfaces
$N_1, \cdots , N_r$, with the Hermitian manifold $(\Pi_{i=1}^rN_i,g)$ denoted by $N_1 \times_{\!D} \cdots \times_{\!D} N_r$. The Hermitian metric $g$ is actually {\em BKL} as shown below, which justifies the name pluriclosed.
\end{definition}

\begin{proposition}
The pluriclosed twisted product $N_1 \times_{\!D} \cdots \times_{\!D} N_r$ defined above is BKL.
\end{proposition}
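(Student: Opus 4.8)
The plan is to verify the Bismut Kähler-like condition for the twisted metric $g$ directly by computing its structure equations and showing that the resulting Bismut curvature matrix satisfies $\,^t\!\varphi\,\widetilde{\Theta}^b=0$. The computation is essentially the reverse of the analysis carried out in the proof of Theorem \ref{thm4}: there we started from a full BKL manifold of minimal $B$-rank and extracted $r$ BKL surfaces; here we start from $r$ given BKL surfaces and reassemble them with the twisting matrix $D$, so the bulk of the intermediate formulae should transfer with only cosmetic changes.

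First I would fix a $\phi$-compatible frame $(e_1^i,e_2^i)$ on each factor $(N_i,h_i)$ and record the structure equations of the BKL surface $N_i$ in the form already derived above, namely $d\varphi_2^i = 2\varphi_1^i\overline{\varphi_1^i}$ and $d\varphi_1^i = -\xi_i\,\varphi_1^i$ for an appropriate connection form $\xi_i$, together with the single nonzero Chern torsion component $\widetilde{T}^1_{12}=-1$. The key algebraic hypothesis on $D$, namely $\sum_i(d_{ij}\overline{d_{ik}}+d_{ik}\overline{d_{ij}})=0$ for $j\neq k$, is exactly the condition that replaces \eqref{Re=0} and guarantees pluriclosedness of $\omega_g$; I would use it to show that the twisted forms $\psi_2^i=\sum_j d_{ij}\varphi_2^j$ satisfy $d\psi_2^i = 2\sum_j d_{ij}\,\varphi_1^j\overline{\varphi_1^j}$, and then pull these back through the inverse matrix of $D$ to express $\varphi_1^i\overline{\varphi_1^i}$ and the differentials of $\varphi_1^i$ in terms of the global coframe $\{\varphi_1^i,\psi_2^i\}$. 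Next I would assemble the Chern connection matrix $\theta^c$ and torsion $\tau$ of $g$ in this coframe, pass to $\gamma$ via \eqref{gm+tht2}, and obtain the Bismut connection matrix $\theta^b=\theta^c+2\gamma$.

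The point I expect to do the real work is showing that $\theta^b$ remains \emph{block diagonal} with respect to the decomposition into the $r$ surface directions — in other words, that twisting by $D$ does not introduce off-diagonal Bismut curvature coupling different factors. This should follow because the twisting affects only the $\psi_2^i$ directions (the kernel-of-$\phi$ part), where the Bismut curvature of each factor already vanishes, while the curvature lives entirely in the $\varphi_1^i$ directions, which are untouched by $D$. Concretely, I would show that $\widetilde{\Theta}^b_{jk}=0$ whenever $j\neq k$ and that each diagonal entry has the form $\widetilde{\Theta}^b_{ii}=\tilde{\kappa}_i\,\varphi_1^i\overline{\varphi_1^i}$ for a local function $\tilde{\kappa}_i$, mirroring the last displayed curvature computation in the proof of Theorem \ref{thm4}.

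Once $\widetilde{\Theta}^b$ is diagonal with each entry a multiple of $\varphi_1^i\overline{\varphi_1^i}$, the BKL condition is immediate: the product $\,^t\!\varphi\,\widetilde{\Theta}^b$ has $k$-th entry $\sum_i \varphi_i\,\widetilde{\Theta}^b_{ik}$, and since the coframe entry $\varphi_1^i$ appearing in $\widetilde{\Theta}^b_{ii}$ also appears as a factor, each term already contains $\varphi_1^i\wedge\varphi_1^i=0$, forcing the whole expression to vanish. This is precisely the mechanism used for the single factor in Theorem \ref{thm4}, and the main obstacle is purely bookkeeping: keeping track of how the inverse of $D$ enters the differentials $d\varphi_1^i$ and verifying that the cross terms $\psi_2^j\overline{\psi_2^k}$ with $j\neq k$ cancel by virtue of the hypothesis on $D$. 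I do not anticipate any conceptual difficulty beyond that cancellation, which is governed entirely by the stated orthogonality-type relation on the twisting matrix.
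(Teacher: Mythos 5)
Your proposal follows essentially the same route as the paper's own proof: write the structure equations of each BKL surface in a $\phi$-compatible coframe, twist the $\varphi_2^i$ by $D$, read off the Chern connection and torsion of $g$, pass to $\theta^b=\theta^c+2\gamma$, and check that $\Theta^b$ is diagonal with each entry a multiple of $\varphi_1^i\overline{\varphi_1^i}$, the hypothesis $\sum_k(d_{kj}\overline{d_{ki}}+d_{ki}\overline{d_{kj}})=0$ being exactly what cancels the cross terms $\varphi_1^j\overline{\varphi_1^j}$, $j\neq i$, in $dS_{ii}$. The only (cosmetic) difference is that you normalize the surface torsion to $\widetilde{T}^1_{12}=-1$ as in the extraction step of Theorem \ref{thm4}, while the paper keeps the eigenvalue $\lambda_i$ explicit.
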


\begin{proof}
As $(e_1^i,e_2^i)$ is a $\phi$-compatible frame on the non-K\"ahler BKL surface $(N_i,h_i)$, with dual frame denoted by $(\varphi_1^i,\varphi_2^i)$, for $1 \leq i \leq r$.
It follows from $d\varphi = -^t\!\theta^b \varphi + 2 ^t\!\gamma \varphi + \tau$ that
\begin{align*}
d \varphi_1^i &= (- ^i\!\theta^b_{11} + 2 \lambda_i (\varphi^i_2 - \overline{\varphi}^i_2))\varphi^i_1, \\
d \varphi_2^i &= -2 \lambda_i \varphi_1^i \overline{\varphi}_1^i,
\end{align*}
where $\lambda_i>0$ is the only non-zero eigenvalue of $\frac{1}{\lambda_i}\phi_i$ on $(N_i,h_i)$ and $^i\!\theta^b_{11}$ comes from the Bismut connection matrix $^i\!\theta^b$ under $(e_1^i,e_2^i)$, satisfying $d \,^i\!\theta^b_{11}= \kappa_i \varphi_1^i \overline{\varphi}_1^i$ for some local real function $\kappa_i$. Note that
\[\psi_2^i = \sum_j d_{ij} \varphi_2^j,\]
where inverse matrix of $D$ is denoted by $C=(c_{ij})$. It implies
\begin{align*}
d \varphi_1^i &= (- ^i\!\theta^b_{11} + 2 \lambda_i \sum_j (c_{ij}\psi^j_2 - \overline{c}_{ij}\overline{\psi}^j_2))\varphi^i_1, \\
d \psi_2^i &= -2 \sum_j d_{ij} \lambda_j \varphi_1^j \overline{\varphi}_1^j,
\end{align*}
Then we get, under the unitary frame $(\varphi^i_1,\psi^i_2)$ of the Hermitian metric $g$, that
\[d\!\begin{bmatrix} \varphi^i_1 \\ \psi^i_2\end{bmatrix}\!=\!\begin{bmatrix} R & Q \\ -^t\!\overline{Q} & 0 \end{bmatrix}\!
\begin{bmatrix} \varphi^i_1 \\ \psi_2^i \end{bmatrix}\! + \!\begin{bmatrix} \beta_i \\ 0 \end{bmatrix}\!,\]
where
\begin{gather*}
R_{ij}= \Big( -\, ^i\!\theta_{11}^b +
2 \lambda_i \sum_k \big( c_{ik} \psi^k_2 - \overline{c_{ik} \psi^k_2} \big) \Big) \delta_{ij}, \\
Q_{i j} = -2 \overline{d}_{ji} \lambda_i \varphi_1^i ,\quad
\beta_i = 2 \lambda_i \sum_k \overline{d}_{ki} \varphi_1^i \psi_2^k .
\end{gather*}
It follows that the Chern connection matrix $\theta^c$ and Chern torsion $\tau$ are given by
\[\theta^c =\!\begin{bmatrix} \overline{R} & \overline{Q} \\ -^t\!Q & 0 \end{bmatrix}\!, \quad \tau^i = \beta_i,\quad \tau^{r+i}=0. \]
This indicates that the possibly non-zero Chern torsion components are
\[ T^i_{j\,r+k} = \lambda_i \overline{d}_{ki} \delta_{ij},\]
which implies the tensor $\gamma$ is given by
\begin{gather*}
\gamma_{ij} = \lambda_i \delta_{ij} \sum_k (\overline{d}_{ki} \psi_2^k - d_{ki} \overline{\psi}_2^k ),\\
\gamma_{i\,r+j} = \lambda_i d_{ji} \overline{\varphi}_1^i,\quad
\gamma_{r+i\,j} = -\overline{\gamma_{j\, r+i}}, \quad \gamma_{r+i\,r+j}=0.
\end{gather*}
Hence the Bismut connection matrix $\theta^b$ yields
\[ \theta^b = \theta^c + 2\gamma = \!\begin{bmatrix} S & 0 \\ 0 & 0 \end{bmatrix}\!,\]
where
\[S_{ij} = \Big(\, ^i\!\theta^b_{11} + 2 \sum_k \big((\overline{d}_{ki}-c_{ik})\psi_2^k - (d_{ki}-\overline{c}_{ik})\overline{\psi}_2^k \big) \Big) \delta_{ij},\]
and thus the Bismut curvature is given by
\[ \Theta^b = d\theta^b - \theta^b \theta^b = \!\begin{bmatrix} dS & 0 \\ 0 & 0 \end{bmatrix}\!.\]
Here we have,
\[dS_{ij} = \big(\kappa_i - 8\lambda_i(\sum_k|d_{ki}|^2 -1)\big) \varphi_1^i \overline{\varphi}_1^i \delta_{ij}.\]
Therefore, it is easy to see $(\varphi_1^i, \psi_2^i) \Theta^b =0$, and thus the pluriclosed twisted product is {\em BKL}.
\end{proof}

Hence we have proved the first part of Theorem \ref{thm4}. As to the second part, let $(M^4,g)$ be a complete, simply-connected full {\em BKL} manifold. It follows from Theorem \ref{thm1} that the $B$-rank $r$ must be either $2$ or $3$. If $r=3$, $M^4$ is Bismut flat by Theorem \ref{thm2}. If $r=2= \frac{n}{2}$, it turns out from the proof above to be the pluriclosed twisted product of two non-K\"ahler {\em BKL} surfaces. The proof of Theorem \ref{thm4} is completed.
\vsv

For the remaining part of this section, we will discuss the {\em pluriclosed twisted product} for factors in higher dimensions. It turns out that it is actually a phenomenon of {\em BKL} surfaces, which does not generalize to higher dimensions, in order to preserve the pluriclosedness. Let $(N_1^n,h_1)$, $(N_2^m,h_2)$ be two {\em BKL} manifolds with $A>0$. Write $M^{n+m}= N_1\times N_2$ for the product complex manifold and $h=h_1\times h_2$ for the product metric. Of course $h$ is {\em BKL}. Denote by $\varphi$, $\varphi'$ the local coframes dual to $\phi$-compatible frames $e,e'$ on $N_1$, $N_2$, respectively. Let
$$ \psi_1 = u_1 \varphi_n + u_2 \varphi_m', \ \ \ \psi_2 = v_1 \varphi_n + v_2 \varphi_m'$$
where $u_1$, $u_2$, $v_1$, $v_2$ are constants, satisfying
\[ \det\! \begin{bmatrix}u_1 & u_2 \\
                         v_1 & v_2 \end{bmatrix}\! \neq 0,\quad  \]
Define a Hermitian metric $g$ on the product complex manifold $M=N_1\times N_2$ by letting
$$ \{ \varphi_1 , \ldots , \varphi_{n-1}, \psi_1, \varphi_1', \ldots , \varphi_{m-1}', \psi_2\} $$
to be its unitary coframe. Clearly, $g$ is well defined. When $\psi_1$ is not perpendicular to $\psi_2$, namely, when $\beta := u_1\overline{u}_2 + v_1\overline{v}_2\neq 0$, $g$ is not a product metric on $M$.

Since $\eta = \lambda \varphi_n$, $\eta' = \lambda ' \varphi_m'$, we have
$$ \psi_1 \overline{\psi}_1 + \psi_2 \overline{\psi}_2 = \frac{\alpha}{\lambda^2} \eta \overline{\eta} + \frac{\alpha '}{\lambda'^2} \eta' \overline{\eta'} + \frac{\beta }{\lambda \lambda '} \eta \overline{\eta'} + \frac{\overline{\beta}}{\lambda \lambda '} \eta' \overline{\eta}, $$
where $\alpha = |u_1|^2+ |v_1|^2$ and $\alpha' = |u_2|^2+ |v_2|^2$. From this, we see that
$$ \omega_g = \omega_h  + \sqrt{\!-\!1} \big( \frac{\alpha -1}{\lambda^2} \eta \overline{\eta} + \frac{\alpha ' -1}{\lambda'^2} \eta' \overline{\eta'} + \frac{\beta }{\lambda \lambda'} \eta \overline{\eta'} + \frac{ \overline{\beta}}{ \lambda \lambda'}  \eta' \overline{\eta} \big) .$$
Taking $\partial \overline{\partial}$, we get that $\partial \overline{\partial}\omega_g =0$ if and only if
\begin{equation}
(\alpha -1) \partial \overline{\partial} (\eta \overline{\eta}) = (\alpha' -1) \partial \overline{\partial} (\eta' \overline{\eta'})=  \partial \overline{\partial} \big( \beta \eta \overline{\eta'} + \overline{\beta} \eta' \overline{\eta} \big) = 0.
\end{equation}
By using \cite[Lemma 15]{ZZ}, this means that
$$ (\alpha -1)\mbox{Re} (a_i \overline{a_k}) = (\alpha' -1)\mbox{Re} (a_j' \overline{a_{\ell}'}) = \mbox{Re} (\beta \overline{a_i} a_j') =0, \ \ \ \forall \ 1\leq i\neq k\leq n, \ \forall \ 1\leq j\neq \ell \leq m $$
where $a_i$ and $a_j'$ are the eigenvalues of $\frac{1}{\lambda}\phi$ and $\frac{1}{\lambda'}\phi'$ of $h_1$ and $h_2$, respectively. Since $\sum_i a_i =\lambda$ and $\sum_j a_j'=\lambda'$, by summing over $i$ and $j$ in the third equation above, we get
$$ \mbox{Re} (\beta ) =0, \ \ \ \mbox{Re} (\beta \overline{a_i})=0, \ \ \ \mbox{Re} (\beta a_j') = 0. $$
Now if $\beta\neq 0$, then $\beta=ic$ for some real number $c \neq 0$, and all $a_i$ and all $a_j'$ are real. Note that
for a {\em BKL} manifold in dimension $3$ with $A > 0$, the two eigenvalues $a_1$ and $a_2$ cannot be all real as shown in \cite{YZZ}. So
the above metric $g$ cannot be pluriclosed if one of factor is of dimension $3$ with $A>0$, unless $\beta = 0$. In particular, there is no twisted pluriclosed product structure for {\em BKL} fivefolds with $A > 0$.

In general, it would be an interesting question to understand all {\em BKL} manifolds $(M^n,g)$ with $A>0$, such that all their $\frac{1}{\lambda} \phi$-eigenvalues $a_i$ are real. Recall that a Hermitian manifold is called {\em locally conformally balanced,} if $d(\eta +\overline{\eta})=0$. For {\em BKL} manifolds, this is equivalent to all $a_i$'s being real. So the above question can be rephrased as understanding all full {\em BKL} manifolds that are locally conformally balanced.

Continue with our discussion on twisted pluriclosed product structure, note that when $\beta =0$, $g$ is the product metric $h_1'\times h_2'$, where $h_i'$ is a modification of $h_i$ in its $\eta$-direction. In general dimensions, for {\em BKL} manifolds satisfying the condition $\partial \overline{\partial}  (\eta \overline{\eta})=0$, one can always scale the $\eta$ direction to get a one parameter family of {\em BKL} metrics.

By \cite[Lemma 15]{ZZ}, the condition  $\partial \overline{\partial}  (\eta \overline{\eta})=0$ is equivalent to
\begin{equation}
 \mbox{Re} (a_i \overline{a_k}) = 0 \ \ \ \ \forall \ i\neq k .
 \end{equation}
Note that this means that $\frac{a_i}{|a_i|} = \pm \sqrt{-1} \frac{a_k}{|a_k|}$ when $i\neq k$ and $a_ia_k\neq 0$. So there can be at most two non-zero $a_i$'s, which will force the dimension to be at most $4$ if $A>0$, and the dimension $4$ case is also twisted product of two {\em BKL} surfaces by Theorem \ref{thm4}.

Now let $(M^n,g)$ be a {\em BKL} manifold satisfying $\partial \overline{\partial}  (\eta \overline{\eta})=0$ and $e$ be a $\phi$-compatible frame. One can always scale the $e_n$ term and still get {\em BKL} metrics. In other words, let us define a Hermitian metric $h$ on $M^n$ with local unitary frame $\{ e_1, \ldots , e_{n-1}, \frac{1}{t}e_n\}$, where  $t>0$ is a constant. Alternatively, $h$ can be expressed as
\begin{equation}
 \omega_h = \sqrt{\!-\!1} \{ \varphi_1 \overline{\varphi}_1 + \cdots + \varphi_{n-1} \overline{\varphi}_{n-1} + t^2\varphi_n \overline{\varphi}_n \}, \label{eq:omegah}
\end{equation}
where $\varphi$ is the coframe dual to $e$. We will call this new metric $h$ an {\em $\eta$-scaling metric} of $g$. Since $\eta = \lambda \varphi_n$, we have
$$ \omega_h - \omega_g = \frac {\sqrt{\!-\!1}\,(t^2-1)}{\lambda ^2} \eta \,\overline{\eta}. $$
Therefore we see that $\omega_h$ is pluriclosed. It is actually also {\em BKL}.

\begin{proposition}
Let $(M^n,g)$ be a BKL manifold satisfying $\partial \overline{\partial}  (\eta \overline{\eta})=0$. Then for any positive constant $t$, the $\eta$-scaling metric $h$ defined by (\ref{eq:omegah}) is BKL.
\end{proposition}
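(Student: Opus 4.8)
The plan is to work throughout in a $\phi$-compatible frame $e$ for $g$, with dual coframe $\varphi$ and $e_n=\tfrac1\lambda X_{\eta}$, and to exploit that the deformation only disturbs the $\nabla^{b}$-parallel direction $e_n$. Set $\tilde e_i=e_i$ for $i<n$ and $\tilde e_n=\tfrac1t e_n$; then $\{\tilde e_i\}$ is an $h$-unitary frame with dual coframe $\tilde\varphi$, where $\tilde\varphi_i=\varphi_i$ for $i<n$ and $\tilde\varphi_n=t\varphi_n$. I will repeatedly use the $g$-facts available under such a frame: $T^j_{in}=\delta_{ij}a_i$; $T^n_{ik}=0$ for all $i,k$ (from $\sum_q\eta_qT^q_{ik}=0$); the purely $(1,1)$ expression $d\varphi_n=-2\sum_{k<n}\overline a_k\,\varphi_k\wedge\overline\varphi_k$; the parallelism of $e_n$, so the $n$-th row and column of $\theta^b$ and of $\Theta^b$ vanish; the block structure $\theta^b_{ij}=0$ whenever $a_i\neq a_j$; and the hypothesis, which by \cite[Lemma 15]{ZZ} reads $\re(a_i\overline a_k)=0$ for all $i\neq k$.

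The first and most delicate step is to compute the Chern torsion $\widetilde T$ of $h$. Because the Chern connection genuinely changes with the metric, $\widetilde T$ is \emph{not} the naive reframing of $T$; instead I will use the connection-free identity $\partial\omega(X,Y,\overline Z)=\sqrt{-1}\,g(T^c(X,Y),\overline Z)$. From $\omega_h=\omega_g+\sqrt{-1}\,(t^2-1)\,\varphi_n\wedge\overline\varphi_n$ together with the $(1,1)$-form $d\varphi_n$ above, a short evaluation of $\partial\omega_h$ on the frame $\tilde e$ gives $\widetilde T^k_{ij}=T^k_{ij}$ when $i,j,k<n$, $\widetilde T^n_{\ast\ast}=0$, and crucially $\widetilde T^k_{in}=t\,T^k_{in}=t\,\delta_{ik}a_i$: the $n$-slot scales by $t$, not by $1/t$. (The naive transformation of $T$ would give $1/t$; the extra factor $t^2$ records the change in the connection.) In particular $\tilde e$ is again $\phi$-compatible for $h$, with eigenvalues $\tilde a_i=t\,a_i$, so that $\re(\tilde a_i\overline{\tilde a}_k)=0$ for $i\neq k$ as well.

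Next I will pin down the Bismut connection of $h$ by an ansatz plus uniqueness. Let $\tilde\gamma$ and $\tilde\tau$ be built from $\widetilde T$ through (\ref{gm+tht2}) and (\ref{eq:2.1}). I claim $\tilde\theta^b=\theta^b+M$, where $M=\mathrm{diag}(\mu_1,\dots,\mu_{n-1},0)$ with $\mu_i:=2(t^2-1)(a_i\varphi_n-\overline a_i\overline\varphi_n)$; each $\mu_i$ satisfies $\overline{\mu_i}=-\mu_i$, so $\tilde\theta^b$ is skew-Hermitian and keeps the $n$-th row and column zero. To verify the ansatz I substitute into the Bismut structure equation $d\tilde\varphi=-\,{}^t\tilde\theta^b\wedge\tilde\varphi+2\,{}^t\tilde\gamma\wedge\tilde\varphi+\tilde\tau$ and compare with $d\tilde\varphi_i=d\varphi_i$ ($i<n$) and $d\tilde\varphi_n=t\,d\varphi_n$. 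The only discrepancy between the $h$- and $g$-sides comes from the terms containing $\varphi_n,\overline\varphi_n$, which on the $h$-side acquire an extra factor $t^2$; collecting them for each $i<n$ produces exactly $\mu_i\wedge\varphi_i$, which is cancelled by the correction $-\mu_i\wedge\tilde\varphi_i$ coming from $M$, while the $n$-th equation is satisfied with zero $n$-column. Since a skew-Hermitian matrix of $1$-forms solving this structure equation is unique, the matrix $\tilde\theta^b$ so constructed is the Bismut connection of $h$.

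Finally I will compute the curvature and check the BKL condition. Since $M$ is diagonal, $M\wedge M=0$, and the cross terms give $-(\theta^b\wedge M+M\wedge\theta^b)_{ij}=\theta^b_{ij}\wedge(\mu_i-\mu_j)$, which vanishes identically: it is zero on the diagonal, and off the diagonal $\theta^b_{ij}\neq0$ forces $a_i=a_j$, hence $\mu_i=\mu_j$. Thus $\tilde\Theta^b=\Theta^b+dM$. Here the hypothesis enters decisively, through $d\mu_i=-8(t^2-1)\sum_{k<n}\re(a_i\overline a_k)\,\varphi_k\wedge\overline\varphi_k$, which by $\re(a_i\overline a_k)=0$ for $k\neq i$ collapses to the single diagonal term $-8(t^2-1)|a_i|^2\,\varphi_i\wedge\overline\varphi_i$. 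Then ${}^t\tilde\varphi\wedge\Theta^b=0$ (the $g$-BKL condition, using that the $n$-th row of $\Theta^b$ vanishes and $\tilde\varphi_i=\varphi_i$ for $i<n$), while the $j$-th entry of ${}^t\tilde\varphi\wedge dM$ is $\tilde\varphi_j\wedge dM_{jj}\propto\varphi_j\wedge\varphi_j\wedge\overline\varphi_j=0$; hence ${}^t\tilde\varphi\wedge\tilde\Theta^b=0$, i.e.\ $h$ is BKL. I expect the genuine obstacle to be the second step: obtaining the correct power of $t$ in $\widetilde T^k_{in}$, for which one must pass through a connection-independent quantity such as $\partial\omega$; after that the role of $\partial\overline\partial(\eta\overline\eta)=0$ is precisely to diagonalize $dM$, and the remainder is bookkeeping.
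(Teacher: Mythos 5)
Your proof is correct, and it is worth comparing carefully with the paper's, because the two disagree at the decisive step. The paper also works in the scaled coframe $\tilde\varphi=(\varphi_1,\dots,\varphi_{n-1},t\varphi_n)$ and also finds $\tilde{T}^j_{ik}=T^j_{ik}$ for $i,k<n$ and $\tilde{a}_i=ta_i$, but it then asserts that the $\gamma$-matrix of $h$ equals $\Gamma=\begin{bmatrix}\gamma_1 & t\overline{v}\\ -t\,^t\!v & 0\end{bmatrix}$ and concludes $\tilde\theta^b=\theta^b$, hence $\tilde\Theta^b=\Theta^b$. In fact, applying (\ref{gm+tht2}) to the new torsion gives $\tilde\gamma_{ii}=(\gamma_1)_{ii}+(t^2-1)(a_i\varphi_n-\overline{a}_i\overline{\varphi}_n)$, since the $k=n$ term picks up $\tilde{a}_i\tilde\varphi_n=t^2a_i\varphi_n$; so $\tilde\theta^b=\theta^b+M$ with $M_{ii}=2(t^2-1)(a_i\varphi_n-\overline{a}_i\overline{\varphi}_n)$, exactly as you claim. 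Your version is the one that can be right: the paper's computation never uses the hypothesis $\partial\overline{\partial}(\eta\overline{\eta})=0$ inside the proof, yet {\em BKL} implies pluriclosed and the discussion immediately preceding the proposition shows that $\omega_h$ is pluriclosed if and only if that hypothesis holds, so a hypothesis-free argument is impossible. Your identification of where the hypothesis enters — $dM_{ii}=-8(t^2-1)\sum_k\mathrm{Re}(a_i\overline{a}_k)\varphi_k\wedge\overline{\varphi}_k$ collapses to a multiple of $\varphi_i\wedge\overline{\varphi}_i$ precisely when $\mathrm{Re}(a_i\overline{a}_k)=0$ for $i\neq k$ — together with the observation that $\theta^b_{ij}\neq 0$ forces $a_i=a_j$ and hence $\mu_i=\mu_j$ (killing the cross terms), completes the verification of $\,^t\!\tilde\varphi\wedge\tilde\Theta^b=0$ correctly. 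The only cosmetic caveat is your phrase ``the naive transformation of $T$ would give $1/t$'': the factor $t$ in $\widetilde{T}^k_{in}$ already follows from the paper's own structure-equation bookkeeping ($\tilde\tau=\tau+2(t-\tfrac1t)v\,\tilde\varphi_n$) and does not require the $\partial\omega$ detour, though your route via a connection-independent quantity is a perfectly sound way to get it.
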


\begin{proof}
Let $\tilde{\varphi}_i=\varphi_i$ for $1\leq i \leq n-1$, and $\tilde{\varphi}_n= t\varphi_n$. Then $\tilde{\varphi}$ becomes a local unitary coframe for $h$. Denote by
$$ P =\! \begin{bmatrix} I_{n-1} & 0 \\ 0 & t \end{bmatrix}\!, \quad v =\! \begin{bmatrix} a_1\varphi_1 \\ \vdots \\ a_{n-1}\varphi_{n-1} \end{bmatrix} \! . $$
Then the Chern connection matrix $\theta^c$ and Chern torsion vector $\tau$ under $\varphi$ are
$$ \theta^c = \theta^b -2 \gamma =\!\begin{bmatrix} \theta^b_1 & 0 \\ 0 & 0 \end{bmatrix}\!  - 2 \!\begin{bmatrix} \gamma_1 & \overline{v}  \\ -\,^t\!v & 0 \end{bmatrix}\! , \quad \tau = \!\begin{bmatrix} \tau_1 \\ 0  \end{bmatrix}\!  ,$$
where we used the subscript $1$ to denote the $(n-1)\times (n-1)$ block or the first $(n-1)$ rows. Let
$$ \Gamma =\! \begin{bmatrix} \gamma_1 & t\overline{v} \\ -t \,^t\!v & 0 \end{bmatrix}\! , $$
then we have
\begin{eqnarray*}
 d\tilde{\varphi} & = &  P d\varphi \ = \ P ( - \,^t\!\theta \varphi + \tau ) \ = \ - \,^t\!(P^{-1} \theta P) \tilde{\varphi} + P\tau \\
 & = & - \,^t\!(P^{-1} \theta^b P - 2 P^{-1} \gamma P ) \tilde{\varphi} + P\tau \\
 & = & - \,^t\!( \theta^b  - 2 P^{-1} \gamma P ) \tilde{\varphi} + \tau \\
 & = & - \,^t\!( \theta^b  - 2 \Gamma ) \tilde{\varphi} + 2 \,^t\!( P^{-1} \gamma P - \Gamma ) \tilde{\varphi} +  \tau \\
 & = & - \,^t\!( \theta^b  - 2 \Gamma ) \tilde{\varphi} + 2(t-\frac{1}{t})v\tilde{\varphi}_n + \tau.
 \end{eqnarray*}
From this, we know that the Chern connecton matrix $\tilde{\theta}$ and torsion vector $\tilde{\tau}$ of $h$ under $\tilde{\varphi}$ are
$$ \tilde{\theta}^c = \theta^b - 2\Gamma , \ \ \ \ \tilde{\tau} = 2(t-\frac{1}{t})v\tilde{\varphi}_n + \tau. $$
Hence the torsion components of $h$ are
$ \tilde{T}^j_{ik} = T^j_{ik}$ for $i,k<n$ and $\tilde{T}^j_{in} = \tilde{a}_i \delta_{ij}$ where $\tilde{a}_i = ta_i$. Thus, the $\gamma$ matrix of $h$ is $\tilde{\gamma }=\Gamma$, and the Bismut connection matrix of $h$ is
$$ \tilde{\theta}^b = \tilde{\theta} + 2 \tilde{\gamma} = \theta^b - 2\Gamma + 2\Gamma = \theta^b. $$
So $\tilde{\Theta}^b = \Theta^b$, which is block diagonal with lower right corner being zero. Therefore $\,^t\!\tilde{\varphi} \, \tilde{\Theta}^b =0$, and $h$ is {\em BKL}.
\end{proof}

Note that we have proved here that $\tilde{\lambda} = t \lambda $, so $\tilde{\eta} = t^2\eta$. Also, $\tilde{\Theta}^b = \Theta^b$, under $\tilde{e}$ and $e$, respectively. In particular, $h$ will be Bismut flat if and only if $g$ is so.

\vs

\section{{\em BKL} manifolds of dimension $5$.}\label{BKL5D}
Let us discuss the $5$-dimensional case. Let $(M^5,g)$ be a complete, simply-connected {\em BKL} manifold with $A>0$. The $B$-rank $r=r_{B}$ can only be $3$ or $4$ by Theorem \ref{thm1}. The $r=4$ case implies that $g$ is Bismut flat by Theorem \ref{thm2}, so we will assume that $r=3$ from now on. Let $e$ be a strictly $\phi$-compatible frame, with $a_1a_2a_3\neq 0$ and $a_4=0$. For convenience, we may set $b_{5i} = a_i$ so that the entries of $\hat{b}$ can be unified as $(b_{\alpha i})$, where $4 \leq \alpha \leq 5$, $1 \leq i \leq 3$, which are global constants on $M^5$ by Remark \ref{cst}. It follows from Lemma \ref{lemma2} that $\mbox{rank}\,(\hat{b})=2$. We also know from Remark \ref{cst} that $e_4$ and $e_5$ are globally defined, $\nabla^b$-parallel, holomorphic vector fields on $M^5$. It holds that $\theta^b$ is diagonal, and thus $\Theta^b$ is also diagonal, with $\Theta^b_{44}=\Theta^b_{55}=0$.

If $T^j_{ik}\neq 0$ for some $i,j,k\leq 3$, then Lemma \ref{lemma3} implies that, for $4 \leq \alpha \leq 5$,
\[b_{\alpha j} = b_{\alpha i} + b_{\alpha k},\]
and thus $i$, $j$ and $k$ are necessarily distinct. Without loss of generality, we may assume that $T^3_{12} \neq0$. Then it holds that $T^1_{23} = 0$ and $T^2_{13} =0$. If not, say $T^{1}_{23} \neq 0$, Lemma \ref{lemma3} indicates, for $4 \leq \alpha \leq 5$,
\[b_{\alpha 1} = b_{\alpha 2} + b_{\alpha 3},\]
which implies $b_{\alpha 2}=0$, and it contradicts with $a_2 =b_{52} \neq 0$. Then
let $i=1,k=2$ in the equality \eqref{eq:Psum=0}, it yields
\[|T^3_{12}|^2 = - \sum_{\alpha=4}^5 2 \mbox{Re} (b_{\alpha 1} \overline{b}_{\alpha 2}).\]
Similarly, let $i=1,k=3$ and $i=2,k=3$ in the equality \eqref{eq:Psum=0} respectively and we get
\begin{gather*}
|T_{12}^3|^2 = \sum_{\alpha =4}^5 2 \mbox{Re} (b_{\alpha 1} \overline{b}_{\alpha 3}),\\
|T_{12}^3|^2 = \sum_{\alpha =4}^5 2 \mbox{Re} (b_{\alpha 2} \overline{b}_{\alpha 3}).
\end{gather*}
These three equalities implies, together with $b_{\alpha 3} = b_{\alpha 1} + b_{\alpha 2}$,
\[|T_{12}^3|^2 = \sum_{\alpha} |b_{\alpha 1}|^2 = \sum_{\alpha} |b_{\alpha 2}|^2 = \sum_{\alpha} |b_{\alpha 3}|^2.\]
Hence, $|T_{12}^3|$ is a global constant. Then we may rotate $e_3$ while fixing others so that $T^3_{12} = |T^3_{12}|$.
Then for any tangent vector $X$, we have
$$\begin{aligned}
0 = T^3_{12,X} &= X(T^3_{12}) - T^3_{q2}\theta^b_{1q}(X) - T^3_{1q} \theta^b_{2q}(X) + T^q_{12}\theta^b_{q3}(X) \\
               &= -T^3_{12} \{ \theta^b_{11}(X) + \theta^b_{22}(X) - \theta^b_{33}(X) \},
\end{aligned}$$
which implies
\[ \theta^b_{11} + \theta^b_{22} = \theta^b_{33},\]
and thus
$$ \Theta^b_{11}=\Theta^b_{22}=\Theta^b_{33}=0.$$
So $\Theta^b_{qq}=0$ for all $1\leq q\leq 5$ and the manifold is Bismut flat.

After this, we will assume that
\begin{equation}
T^j_{ik}=0 \ \ \ \ \forall \ i,j,k \leq 3.\label{eq:allisolated}
\end{equation}
This means that the only possibly non-zero torsion components are $b_{\alpha i}$, where $1 \leq i \leq 3$ and $4 \leq \alpha \leq 5$. The the equality \eqref{eq:Psum=0} implies
\begin{equation}
2\mbox{Re} \sum_{\alpha=4}^5 ( b_{\alpha i} \overline{b}_{\alpha k}) =0, \ \ \ \ 1\leq i <k\leq 3 .\label{eq:abik}
\end{equation}
Also, by the property $B=\phi + \phi^{\ast}$ for {\em BKL} manifolds, we get
\begin{equation}
\lambda ( b_{5i} + \overline{b}_{5i}) = 2 \sum_{\alpha=4}^5|b_{\alpha i}|^2, \ \ \ \ \  1\leq i\leq 3. \label{eq:abi}
\end{equation}
Note that $\theta^b$ is diagonal, with $\theta^b_{44}=\theta^b_{55}=0$. It follows that the Levi-Civita covariant derivatives
$$ \nabla e_i = \nabla^b e_i - \gamma (e_i) + \overline{\theta_2}(e_i) = \theta^b_{ii} e_i +\sum_{q,k} \{\overline{T^i_{qk}} \overline{\varphi}_k e_q -T^q_{ik}\varphi_k e_q + T^k_{iq} \overline{\varphi}_k \overline{e}_q \} ,$$
which yields, for $1 \leq i \leq 3$ and $4 \leq \alpha \leq 5$,
\begin{eqnarray*}
\nabla e_i & = & \xi_i e_i - \overline{\varphi}_i (X_i-\overline{X}_i ) ,  \\
\nabla e_\alpha &= &\sum_{i=1}^3 b_{\alpha i} (\varphi_i e_i - \overline{\varphi}_i \overline{e}_i),  \quad \mbox{where}\\
X_i & = & \sum_{\alpha=4}^5 \overline{b}_{\alpha i} e_\alpha, \\
\xi_i & = & \theta^b_{ii} - \sum_{\alpha=4}^5 b_{\alpha i} \varphi_{\alpha} + \sum_{\alpha=4}^5 \overline{b}_{\alpha i} \overline{\varphi}_{\alpha}.
\end{eqnarray*}
It is clear from $\mbox{rank}\,(\hat{b})=2$ that $\mbox{span}\{X_1,X_2,X_3\}=\mbox{span}\{e_4,e_5\}=N$.
Decompose $e_i$ into the real and imaginary parts, for $1 \leq i \leq 5$, as
\[e_i = \frac{1}{\sqrt{2}}(\eps_i - \sqrt{\!-\!1} \eps_{i^*}),\]
where $\eps_{i^*}=\eps_{i+5}$. Note that, for each $1 \leq i \leq 3$,
\[ -\sqrt{-1}(X_i - \overline{X}_i) = -\sqrt{-1} \sum_{\alpha=4}^5 (\overline{b}_{\alpha i} e_\alpha - b_{\alpha i} \overline{e}_{\alpha}), \]
and its length square is $2\sum\limits_{\alpha=4}^5|b_{\alpha i}|^2 = B_{i\bar{i}}:= B_i$. Denote by $Y_i$ the unit vector in the direction of $-\sqrt{-1}(X_i - \overline{X}_i)$, that is, $-\sqrt{\!-\!1}(X_i-\overline{X}_i ) = \sqrt{B_{i}} \,Y_i.$
Then the equality (\ref{eq:abik}) indicates that $Y_1$, $Y_2$, $Y_3$ are mutually perpendicular in the real underlying space of $N$
$$ N_{\mathbb R} = \mbox{span}\{ \eps_4, \eps_{4^*}, \eps_5, \eps_{5^*} \} .$$
Let $Y_4$ be the unique unit vector in $N_{\mathbb R}$ such that $\{ Y_1, Y_2, Y_3, Y_4\}$ forms a positive orthonormal basis of $N_{\mathbb R}$. It is easy to see that $Y_4$ can be expressed as
$$ Y_4 = \frac{-\sqrt{\!-\!1}}{\sqrt{2}} \sum_{\alpha=4}^5 (\overline{b}_{\alpha 4} e_{\alpha} - b_{\alpha 4} \overline{e}_\alpha), $$
where $b_{\alpha 4}$ are complex numbers such that $ \sum\limits_{\alpha=4}^5 \mbox{Re} (b_{\alpha 4} \overline{b}_{\alpha i})  =0$ for $1\leq i \leq 3$, $4\leq \alpha \leq 5$ and $\sum\limits_{\alpha=4}^5|b_{\alpha 4}|^2=1$. It follows, for each $1\leq i\leq 3$, that
\begin{eqnarray*}
\nabla Y_4 & = &- \sqrt{\!-\!2} \sum_{\begin{subarray}{c} 1 \leq k \leq 3 \\ 4 \leq \alpha \leq 5\end{subarray}}
\big\{ \mbox{Re} (b_{\alpha 4}\overline{b}_{\alpha k} ) \cdot (\varphi_k e_k - \overline{\varphi}_k \overline{e}_k) \big\} \ = \ 0, \\
\nabla Y_i & = & \frac{-2\sqrt{\!-\!1}}{\sqrt{B_i}} \sum_{\begin{subarray}{c} 1 \leq k \leq 3 \\ 4 \leq \alpha \leq 5\end{subarray}}
\big\{ \mbox{Re} (b_{\alpha i}\overline{b}_{\alpha k}) \cdot (\varphi_k e_k - \overline{\varphi}_k \overline{e}_k ) \big\} \\
& = &  -\sqrt{\!-\!B_i} (\varphi_i e_i - \overline{\varphi}_i \overline{e}_i),  \\
\nabla e_i & = & \xi_i e_i -  \sqrt{\!-\! B_{i}} \, \overline{\varphi}_i\, Y_i.
\end{eqnarray*}
It is clear that the distribution $L_i = \mbox{span} \{ \eps_i, \eps_{i^*}, Y_i\}$ is parallel with respect to $\nabla$, hence on $M^5$ it gives a de Rham decomposition factor, and by
$$ \nabla_{\eps_i} Y_i = \sqrt{B_{i}} \,\eps_{i*}, \ \ \ \nabla_{\eps_{i^*}} Y_i = - \sqrt{B_{i}} \,\eps_{i}, $$
we know that $\frac{1}{\sqrt{B_{i}}} \nabla Y_i$ is equal to the complex structure $J$ on $H_i:= \mbox{span}\{ \eps_i , \eps_{i^*}\}$, the orthogonal complement of $Y_i$ in $L_i$.  Also, by the fact $\nabla_{Y_i}Y_i=0$, $Y_i$ is a Killing vector field on $L_i$. So, as in \cite{YZZ}, $L_i$ is a Sasakian $3$-manifold with Reed vector field $Y_i$. Note that the complex structure $J\big|_{N_{\mathbb{R}}}$ forms a constant skew-symmetric and orthogonal matrix with respect to the Reed vector fields $\{Y_1,Y_2,Y_3,Y_4\}$, whose determinant is necessarily $1$. Therefore, $(M^5,g)$ is isometric to the metric product $L_1\times L_2 \times L_3 \times {\mathbb R}$, where $L_i$ are Sasakian $3$-manifolds and the last factor is generated by the $\nabla$-parallel vector field $Y_4$. The compatible complex structure $J$ on the distribution generated by the four Reeb vector fields is determined by a $4\times4$ skew symmetric and orthogonal matrix, while on $H_i$ it amounts to $\frac{1}{\sqrt{B_{i}}} \nabla Y_i$, for $1 \leq i \leq 3$. This makes $L_1\times L_2 \times L_3 \times {\mathbb R}$ into a Hermitian manifold and motivates the following definition

\begin{definition}\label{mprod}
Given $r$ Sasakian $3$-manifolds $(L_1,h_1),\cdots,(L_r,h_r)$ and a skew symmetric and orthogonal matrix $D=(d_{ij})$ of the size $r+s$, where  $r$, $s$ are nonnegative integers with $r+s$ being even, we can define the {\em multiple product of  Sasakian $3$-manifolds}, which is the Riemannian product manifold $\Pi_{i=1}^r L_i \times \mathbb{R}^s$, endowed with the product metric and a compatible almost complex structure $J$ described below:

For each $1\leq i\leq r$, denote by $Y_i$ the Reeb vector field on $L_i$ and write $Y_{r+j}=\frac{\partial}{\partial x_{j}}$, where $(x_1, \ldots , x_s)$ is the standard coordinate of on $\mathbb{R}^{s}$, for $1 \leq j \leq s$, with $Y_{r+j}$ being null in case $s=0$. When restricted on the distribution generated by $\{ Y_{1},\ldots,Y_{r+s}\} $, $J$ is determined by $D$ in the manner $JY_{k}=\sum_{\ell =1}^{r+s} d_{k\ell }Y_{\ell}$, while on each $H_i$ it is equal to $\frac{1}{c_i} \nabla Y_i$. Here $H_i$ is the orthogonal complement of $Y_i$ in $L_i$, $\nabla$ is the Levi-Civita connection of $(L_i,h_i)$, and $c_i$ is some positive constant. This $J$ is actually integrable as we shall see below, so it makes $\Pi_{i=1}^r L_i \times \mathbb{R}^s$ into a Hermitian manifold.
\end{definition}

It is easy to see that multiple product of Sasakian $3$-manifolds with $r=2$ and $s=0$ are just the standard Hermitian structure on the product of two Sasakian $3$-manifolds as in \cite{Belgun12} and \cite{YZZ}.

\begin{proposition}
The multiple product of Sasakian $3$-manifolds $\Pi_{i=1}^r L_i \times \mathbb{R}^s$ is BKL.
\end{proposition}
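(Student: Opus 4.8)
The plan is to reverse the analysis carried out just before Definition \ref{mprod}: instead of extracting Sasakian factors from a given BKL fivefold, I start from the prescribed data $(L_1,h_1),\dots,(L_r,h_r)$, $\mathbb{R}^s$ and $D$, build an adapted unitary $(1,0)$-coframe, and verify both the integrability of $J$ and the BKL condition by a direct structure-equation computation. The same bookkeeping that produced the numbers $b_{\alpha i}$ in the fivefold case will reappear, now driven by the entries $d_{ij}$ of $D$.

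First I would fix adapted frames on the building blocks. On each Sasakian $3$-manifold $L_i$, let $Y_i$ be the Reeb field and choose a local orthonormal frame $\varepsilon_i,\varepsilon_{i^*}$ of $H_i$ with $J\varepsilon_i=\varepsilon_{i^*}$, so that the Sasakian structure equations read $\nabla_{\varepsilon_i}Y_i=c_i\varepsilon_{i^*}$, $\nabla_{\varepsilon_{i^*}}Y_i=-c_i\varepsilon_i$, $\nabla_{Y_i}Y_i=0$, consistent with $J|_{H_i}=\tfrac{1}{c_i}\nabla Y_i$. Set $e_i=\tfrac{1}{\sqrt2}(\varepsilon_i-\sqrt{-1}\,\varepsilon_{i^*})$, a unit $(1,0)$-vector spanning $H_i^{1,0}$. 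Since $D$ is skew-symmetric and orthogonal, $D^2=-I$, so $J|_N$ is a genuine complex structure on $N=\mathrm{span}\{Y_1,\dots,Y_{r+s}\}$; I would then choose a constant unitary $(1,0)$-frame $e_{r+1},\dots,e_{(3r+s)/2}$ of $N^{1,0}$, each $e_\alpha$ a fixed complex combination of the $Y_k$ with coefficient matrix determined by $D$. This yields a unitary $(1,0)$-frame for $g$ on $\Pi_{i=1}^rL_i\times\mathbb{R}^s$, and the relations $JY_k=\sum_\ell d_{k\ell}Y_\ell$ get encoded in constants $b_{\alpha i}$ playing exactly the role of \eqref{eq:a}--\eqref{eq:b}.

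Next I would compute the exterior derivatives of the dual coframe $\{\varphi_i,\psi_\alpha\}$. Because the metric is a Riemannian product, the Levi-Civita derivatives decouple: on each $L_i$ they are governed by the Sasakian equations above, the $\mathbb{R}^s$-directions $Y_{r+1},\dots,Y_{r+s}$ are $\nabla$-parallel, and there are no cross-terms between distinct factors. Feeding this into $d\varphi=-\,^t\!\theta^c\wedge\varphi+\tau$ gives each $\varphi_i$ a $(1,1)$-part (from $\theta^c_{ii}$ and the $D$-twisted $N$-terms) together with a $(2,0)$-part, while each $\psi_\alpha$ has purely $(1,1)$ exterior derivative. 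The decisive point is that no $(0,2)$-component arises, so $d\varphi_i,d\psi_\alpha\in\Lambda^{2,0}\oplus\Lambda^{1,1}$ and $J$ is integrable. Reading off the $(2,0)$-parts then shows the only possibly nonzero Chern torsion components are $T^i_{i\alpha}=$ constant built from $D$, with the Sasakian constants $c_i$ feeding the eigenvalues $a_i$, exactly matching \eqref{eq:a}--\eqref{eq:b}.

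Finally I would verify the BKL condition. With the torsion explicit, $\gamma$ comes from \eqref{gm+tht2} and $\theta^b=\theta^c+2\gamma$. Here the skew-symmetry and orthogonality of $D$ enter decisively: they produce the relations $\sum_\alpha\mathrm{Re}(b_{\alpha i}\overline{b}_{\alpha k})=0$ for $i\neq k$ together with the analogue of $B=\phi+\phi^\ast$, which force every off-diagonal and every $N$-block entry of $\theta^b$ to cancel, leaving $\theta^b$ diagonal over the $H$-part and zero on $N$. A direct computation of $\Theta^b=d\theta^b-\theta^b\wedge\theta^b$ (as in the pluriclosed twisted product proposition) then yields each $\Theta^b_{ii}$ a multiple of $\varphi_i\overline{\varphi}_i$ and $\Theta^b_{\alpha\alpha}=0$, whence $^t\!\varphi\,\Theta^b=0$ and $g$ is BKL. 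I expect the main obstacle to be precisely this last cancellation: checking that a general skew-symmetric orthogonal $D$ of size $r+s$, which couples Reeb directions across different Sasakian factors with the flat $\mathbb{R}^s$-directions, delivers exactly the identities needed for $\theta^b$ to be diagonal rather than merely block diagonal. Alternatively one may bypass the curvature computation via \cite{ZZ}, reducing BKL to pluriclosedness (which follows from $\partial\overline{\partial}\omega_g=0$ using the orthogonality of $D$, as via \cite[Lemma 15]{ZZ}) together with $\nabla^bT=0$ (immediate since the torsion is constant in the adapted frame and $\theta^b$ preserves the eigenspace decomposition).
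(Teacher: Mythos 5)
Your proposal follows essentially the same route as the paper: build an adapted unitary $(1,0)$-coframe from the Sasakian frames on each $L_i$ together with a constant complexification of the Reeb/$\mathbb{R}^s$ directions determined by $D$ (the paper conjugates $D$ into standard form by some $P\in SO(2m)$), check integrability by the absence of $(0,2)$-parts in the structure equations, read off the constant torsion, and verify $^t\!\varphi\,\Theta^b=0$ after computing $\theta^b=\theta^c+2\gamma$. The cancellation you flag as the main obstacle is resolved in the paper exactly as you anticipate — the product structure forces $\theta^b$ to be diagonal over the $H$-part and zero on $N$, with $\Theta^b_{ii}=(2c_i^2-\kappa_i)\varphi_i\overline{\varphi}_i$ — so your plan is sound and matches the paper's proof.
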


\begin{proof}
Let $(\eps_i,\eps_{i^*},Y_i)$ be an orthonormal frame on the Sasakian $3$-manifold $(L_i,h_i)$ for $1 \leq i \leq r$, where $Y_i$ is the Reeb vector field on $L_i$ and $\frac{1}{c_i} \nabla_{\eps_i} Y_i=\eps_{i^*}$. By the definition of Sasakian $3$-manifold, $\frac{1}{c_i} \nabla Y_i$ determines an orthogonal integrable complex structure on  $H_i= \mbox{span}\{\eps_i,\eps_{i^*}\}$. Let
\[e_i = \frac{1}{\sqrt{2}}(\eps_i - \sqrt{-1} \eps_{i^*})\]
and we get the following structure equation of $(L_i,h_i)$ under the frame $(Y_i,e_i,\overline{e}_i)$
\[ \nabla \! \begin{bmatrix} Y_i \\ e_i \\ \overline{e}_i \end{bmatrix}=
\begin{bmatrix} 0 & c_i \sqrt{-1}\varphi_i & - c_i \sqrt{-1} \overline{\varphi}_i \\
c_i \sqrt{-1} \overline{\varphi}_i & \overline{\alpha}_i & 0 \\
-c_i \sqrt{-1} \varphi_i & 0 & \alpha_i \end{bmatrix}\!\!
\begin{bmatrix} Y_i \\ e_i \\ \overline{e}_i \\ \end{bmatrix}\!\!,\]
where $\alpha_i$ is a $1$-form satisfying $\alpha_i + \overline{\alpha}_i=0$. Denote the dual coframe of $(Y_i,e_i,\overline{e}_i)$  by $(\phi_i,\varphi_i,\overline{\varphi}_i)$, then it follows that
\begin{equation}\label{dssk}
d \! \begin{bmatrix} \phi_i \\ \varphi_i \\ \overline{\varphi}_i \end{bmatrix}\!=
\nabla \! \begin{bmatrix} \phi_i \\ \varphi_i \\ \overline{\varphi}_i \end{bmatrix} \!=\!
\begin{bmatrix} 0 & -c_i \sqrt{-1} \overline{\varphi}_i & c_i \sqrt{-1} \varphi_i \\
-c_i \sqrt{-1} \varphi_i & \alpha_i & 0 \\
c_i \sqrt{-1} \overline{\varphi}_i & 0 & \overline{\alpha}_i \end{bmatrix}\!\!
\begin{bmatrix} \phi_i \\ \varphi_i \\ \overline{\varphi}_i \\ \end{bmatrix}\!\!.
\end{equation}
The second equality of \eqref{dssk} above implies that $d \alpha_i \wedge \varphi_i =0$ and thus $d \alpha_i \wedge \overline{\varphi}_i=0$. It indicates that
\begin{equation}\label{dalpha} d \alpha_i = \kappa_i \varphi_i \overline{\varphi}_i,\end{equation}
for some local real-valued function $\kappa_i$.

On the distribution generated by $\{ Y_{1},\ldots,Y_{r+s}\} $, $J$ is determined by the equation
\[ JY_{k}=\sum_{\ell =1}^{r+s} d_{k\ell}Y_{\ell },\]
where $D=(d_{ij})$ is a skew-symmetric and orthogonal matrix of the size $r+s$. Write $r+s=2m$. It follows from linear algebra that there exists a matrix $P=(p_{k \ell})\in SO(2m)$ such that
\[ P^{-1}DP=\!\begin{bmatrix} 0 & I_{m} \\ -I_{m} & 0  \end{bmatrix}\!,\]
where $I_{m}$ is the identity matrix of the size $m$. Denote by $Y$ and $Z$ the column vector $^t\!(Y_1,\cdots,Y_{r+s})$ and $^t\!(Z_1,\cdots,Z_{r+s})$, respectively, where $Z$ is set to be $PY$. Extend $\{\phi_i\}_{i=1}^r$ above to $\{\phi_k\}_{k=1}^{r+s}$, abbreviated as $\phi$, such that it is dual to $Y$, and write the dual basis of $Z$ as $\psi$. It is clear that
\[\quad \psi = (^t\!P)^{-1}\phi=P\phi.\]
For $1 \leq i,j \leq r$, $1 \leq \alpha, \beta \leq \frac{r+s}{2}$, we define
\[ \tilde{e}_{\alpha} = \frac{1}{\sqrt{2}}(Z_{\alpha}- \sqrt{-1} Z_{\alpha^*}),\quad
\tilde{\varphi}_{\alpha} = \frac{1}{\sqrt{2}}(\psi_{\alpha} + \sqrt{-1}\psi_{\alpha^*}),\]
where $\alpha^* = \frac{r+s}{2} + \alpha$, and thus $(e_i,\tilde{e}_{\alpha})$ forms a natural unitary frame on the multiple product of $3$-Sasakian manifolds $\Pi_{i=1}^r L_i \times \mathbb{R}^s$, with $(\varphi_i,\tilde{\varphi}_{\alpha})$ being its dual coframe. Then, for $1 \leq \alpha \leq \frac{r+s}{2}$, it yields from \eqref{dssk} that
\begin{align*}
d \tilde{\varphi}_{\alpha} &=  \frac{1}{\sqrt{2}}(d\psi_{\alpha} + \sqrt{-1}d\psi_{\alpha^*}) \\
 &= \frac{1}{\sqrt{2}} \sum_{k=1}^{r+s} (p_{\alpha k} + \sqrt{-1}p_{\alpha^* k}) d\phi_k \\
 &= \sqrt{-2} \sum_{i=1}^{r} c_i (p_{\alpha i} + \sqrt{-1}p_{\alpha^* i}) \varphi_i \overline{\varphi}_i,
\end{align*}
where we use the fact that $d \phi_k=0$ for $k > r$. It also follows from \eqref{dssk} that
\begin{align*}
d \varphi_i &= \Big\{\alpha_i + c_i \frac{\sqrt{-1}}{\sqrt{2}} \sum_{\alpha=1}^{\frac{r+s}{2}}
\big((p_{\alpha i} - \sqrt{-1}p_{\alpha^* i}) \tilde{\varphi}_\alpha + (p_{\alpha i} + \sqrt{-1}p_{\alpha^* i}) \overline{\tilde{\varphi}}_\alpha \big)  \Big\}\varphi_i,
\end{align*}
Hence we get, under the unitary frame $(\varphi_i, \tilde{\varphi}_{\alpha})$
\[d\!\begin{bmatrix} \varphi_i \\ \tilde{\varphi}_\alpha \end{bmatrix}\!=\!\begin{bmatrix} R & Q \\ -^t\!\overline{Q} & 0 \end{bmatrix}\!
\begin{bmatrix} \varphi_i \\ \tilde{\varphi}_\alpha \end{bmatrix}\! + \!\begin{bmatrix} \beta_i \\ 0 \end{bmatrix}\!,\]
where
\begin{gather*}
R_{ij}= \Big(\alpha_j + \frac{c_j \sqrt{-1}}{\sqrt{2}}
\sum_\alpha \big( (p_{\alpha j} - \sqrt{-1} p_{\alpha^* j}) \tilde{\varphi}_{\alpha}
+(p_{\alpha j} + \sqrt{-1} p_{\alpha^* j}) \overline{\tilde{\varphi}}_{\alpha} \big) \Big) \delta_{ij}, \\
Q_{i \alpha} = -\sqrt{-2}c_i (p_{\alpha i} -\sqrt{-1} p_{\alpha^* i}) \varphi_i,\quad
\beta_i = \sqrt{-2}c_i \sum_\alpha (p_{\alpha i} -\sqrt{-1} p_{\alpha^* i}) \varphi_i \tilde{\varphi}_{\alpha},
\end{gather*}
and we know that $J$ is integrable as no $(0,2)$-form appears in the right hand side of the equations above. Hence the Chern connection matrix $\theta^c$ and the components of Chern torsion vector $\tau$ under the frame $(\varphi_i, \tilde{\varphi}_{\alpha})$ are
\[ \theta^c =\! \begin{bmatrix} \overline{R} & \overline{Q} \\ -^t\!Q & 0 \end{bmatrix}\!, \quad \tau^i = \beta_i, \quad \tau^{\alpha}=0. \]
It indicates that the only possibly non-zero torsion components are
\[T^i_{j \alpha} = \frac{c_j \sqrt{-1}}{\sqrt{2}}(p_{\alpha j} -\sqrt{-1} p_{\alpha^* j}) \delta_{ij},\]
which implies that the tensor $\gamma$ is given by
\begin{gather*}
\gamma_{ij} = c_j \frac{\sqrt{-1}}{\sqrt{2}} \delta_{ij}\sum_{\alpha}
\big((p_{\alpha j} - \sqrt{-1}p_{\alpha^* j}) \tilde{\varphi}_\alpha + (p_{\alpha j} + \sqrt{-1}p_{\alpha^* j}) \overline{\tilde{\varphi}}_\alpha \big),\\
\gamma_{\alpha i} = - \frac{c_i \sqrt{-1}}{\sqrt{2}} (p_{\alpha i} -\sqrt{-1} p_{\alpha^* i}) \varphi_i,\quad
\gamma_{i \alpha} = -\overline{\gamma_{\alpha i}},\quad \gamma_{\alpha \beta}=0.
\end{gather*}
Hence the Bismut connection matrix $\theta^b$ is given by
\[ \theta^b = \theta^c + 2\gamma = \!\begin{bmatrix} S & 0 \\ 0 & 0 \end{bmatrix}\!,\]
where
\[S_{ij} = \Big( - \alpha_j + \frac{c_j \sqrt{-1}}{\sqrt{2}}
\sum_\alpha \big( (p_{\alpha j} - \sqrt{-1} p_{\alpha^* j}) \tilde{\varphi}_{\alpha}
+(p_{\alpha j} + \sqrt{-1} p_{\alpha^* j}) \overline{\tilde{\varphi}}_{\alpha} \big) \Big) \delta_{ij},\]
and thus the Bismut curvature is given by
\[ \Theta^b = d\theta^b - \theta^b \theta^b = \!\begin{bmatrix} dS & 0 \\ 0 & 0 \end{bmatrix}\!.\]
Here we have, from \eqref{dalpha} and $P=(p_{k \ell}) \in SO(2m)$,
\[dS_{ij} = (2c_j^2 - \kappa_j)\varphi_j \overline{\varphi}_j \delta_{ij}.\]
Therefore, it is easy to see that $(\varphi_i, \tilde{\varphi}_{\alpha}) \Theta^b =0$, and thus the multiple product of Sasakian $3$-manifolds is {\em BKL}.
\end{proof}

It follows easily from the above proof that, after an appropriate constant unitary transformation of $\tilde{\varphi}_{\alpha}$, the unitary frame used above can be switched into a $\phi$-compatible one, and by Remark \ref{Abhat} that, when $r \geq \frac{r+s}{2}$ or equivalently when $r \geq s$, the multiple product of Sasakian $3$-manifolds $\Pi_{i=1}^r L_i \times \mathbb{R}^s$ becomes {\em BKL} with $A>0$. The proof of Theorem \ref{thm5} is completed.

\vspace{0.1cm}

We remark that, since any non-K\"ahler {\em BKL} surface is always the Riemannian product $L\times {\mathbb R}$ of a Sasakian $3$-manifold with the line, it can be directly verified that the case $r=s$ of the multiple product of Sasakian $3$-manifolds actually reduces to the pluriclosed twisted product of $r$ non-K\"ahler {\em BKL} surfaces introduced in Definition \ref{p-twisted}. Note that in this case the manifold as a complex manifold is the product of complex surfaces, although this product structure might be different from the metric product structure. When $r>s$, the manifold is in general not a complex product manifold.

\vspace{0.1cm}

K\"ahler manifolds or Bismut flat manifolds are automatically {\em BKL}. Other than those, our discussion above indicates that one can form {\em BKL} manifolds by taking multiple products of Sasakian $3$-manifolds, and up to dimension $5$  this is the only possibility. At present, we do not know what happens when $n\geq 6$. In particular, we do not know whether there could be a {\em BKL} manifold $(M^6,g)$ with $A>0$ and $r=4$, which is neither Bismut flat nor a multiple product of Sasakian $3$-manifolds.

\vspace{0.4cm}

\noindent\textbf{Acknowledgments.} The authors would like to thank Bo Yang for his interest and discussion, which laid down
the foundation of the computation carried out in this paper.

\vs

\end{document}